\documentclass{article}
\usepackage{amsthm}
\usepackage{amsfonts}
\usepackage{amssymb,latexsym}
\usepackage{amsbsy}
\usepackage{amsxtra}
\usepackage{hyperref}
\usepackage{amsmath}
\usepackage{mathabx}
\usepackage{mathrsfs}
\usepackage{graphicx}
\usepackage{bbm}
\usepackage{geometry}
\usepackage{subfig}
\usepackage{caption}

\theoremstyle{plain}
\newtheorem{theorem}{Theorem}
\newtheorem{lemma}{Lemma}
\newtheorem{proposition}{Proposition}
\newtheorem{corollary}{Corollary}

\begin{document}
 \title{Resolvent Estimates for Schr\"{o}dinger Operators with Potentials in Lebesgue Spaces}
 \author{Tianyi Ren}

\maketitle
 \begin{abstract}
 We prove resolvent estimates in the Euclidean setting for Schr\"{o}dinger operators with potentials in Lebesgue spaces: $-\Delta+V$. The $(L^{2}, L^{p})$ estimates were already obtained by Blair-Sogge-Sire, but we extend their result to other $(L^{p}, L^{q})$ estimates using their idea and the result and method of Kwon-Lee on non-uniform resolvent estimates in the Euclidean space. 
 \end{abstract}

\maketitle
\section{Introduction}

Resolvent estimates for Schr\"{o}dinger operators has always been of keen interest in the study of harmonic analysis and partial differential equations over the past 30 years or so. This interest originated from the consequential work of Kenig, Ruiz and Sogge \cite{KRS} in 1986 that states:

\vspace{0.8cm}
\begin{itshape}
\noindent On $\mathbb{R}^n$ where $n\geqslant3$, if the pair of exponents $r$ and $s$ satisfy the conditions
	
	(a)\[\frac{1}{r}-\frac{1}{s}=\frac{2}{n},\]
	
	(b)\[\mathrm{min}\Big\{\Big|\frac{1}{r}-\frac{1}{2}\Big|, \Big|\frac{1}{s}-\frac{1}{2}\Big|\Big\}> \frac{1}{2n},\]
then there exists a constant $C$, depending only on $n$, $r$ and $s$, such that the following inequality holds:
	\[\|u\|_{L^{s}(\mathbb{R}^n)}\leqslant C\|(\Delta+z)u\|_{L^{r}(\mathbb{R}^n)}, \quad u\in H^{2, r}(\mathbb{R}^n), \quad z\in \mathbb{C}.\]
\end{itshape}

Since then, efforts have branched into several different directions. A most notable one is the corresponding inequality on manifolds, where the Laplacian operator is replaced by the Laplace-Beltrami operator. In this branch we mention the pioneering work of Dos Santos Ferreira-Kenig-Salo \cite{FKS}, and also the prominent work of Bourgain-Sogge-Shao-Yao \cite{BSSY}, of Shao-Yao \cite{SY} and of Huang-Sogge \cite{HS}. In recent years, Frank-Schimmer \cite{FS} proved the endpoint case of Ferreira-Kenig-Salo \cite{FKS}'s result, and their rediscovery of a method in Gutierrez \cite{Gutierrez} has inspired a few later works. 

Another important direction is to prove the resolvent inequality for other pairs of exponents. It was shown in \cite{KRS} that the conditions (a) and (b) are necessary to obtain a uniform bound that does not depend on the complex number $z$. On a coordinate plane whose two axes represent $\frac{1}{p}$ and $\frac{1}{q}$ respectively, the pair of exponents $(\frac{1}{p}, \frac{1}{q})$ constitute the open line segment from $(\frac{n+1}{2n}, \frac{n-3}{2n})$ to $(\frac{n-1}{2n}, \frac{n+3}{2n})$. However, for other exponent pairs, an estimate depending on $z$ might also be of interest. In this direction, we point out the early contribution of Guti\'{e}rrez \cite{Gutierrez} to the Euclidean case. In her result, the non-uniform bound is in the form of a power of $z$. Frank \cite{Frank} proved some crucial $(L^{p}, L^{p})$ estimates (he actually obtained stronger bounds in Schatten spaces). Recently, in a marvelous work, Y. Kwon and S. Lee \cite{KL} found the whole region in the coordinate plane for the exponent pair $(\frac{1}{p}, \frac{1}{q})$ where the resolvent operator norm $\|(-\Delta-z)^{-1}\|_{L^{p}\rightarrow L^{q}}$ on $\mathbb{R}^{n}$ is finite for any given $z\in\mathbb{C}\backslash[0, \infty)$, and provide the optimal bounds for the resolvent operator for exponent pairs in almost all this region (as we shall see soon, the two dimensional case is completely solved, and only two dual triangles in three dimensional and up cases remains unsettled).

In addition to the two directions mentioned above, people have also added a potential $V(x)$ to the Laplacian and proved the corresponding resolvent estimates. The setting could be the Euclidean space or manifolds. In 2019, M. Blair, Y. Sire and C. Sogge \cite{BSS} proved that on an $n$ dimensional compact manifold $M$ where $n\geqslant 4$, if $V\in L^{\frac{n}{2}}(M)$, then letting
\[\sigma (p)=\mathrm{min}\big\{n(\frac{1}{2}-\frac{1}{p})-\frac{1}{2}, \frac{n-1}{2}(\frac{1}{2}-\frac{1}{p})\big\},\]
we have for $\lambda\geqslant 1$,
\begin{multline}
\|u\|_{L^{p}(M)}\leqslant C_{p, V}(\lambda^{\sigma(p)-1}\|(-\Delta_{g}+V-(\lambda+i)^{2})u\|_{L^{2}(M)}+\lambda^{\sigma(p)}\|u\|_{L^{2}(M)}),\\
if \quad u\in C^{\infty}(M),
\end{multline}
on condition that $2<p<\frac{2n}{n-3}$.
If one further assumes that $V$ belongs to the Kato class $\mathcal{K}(M)$\footnote{
On manifold $M$, let for $r>0$
 \[ h(r)=\begin{cases}
	|\mathrm{log}r|&\text{if}\quad n=2\\
	r^{2-n}&\text{if}\quad n\geqslant 3.
\end{cases} \]
A function $V(x)$ on $M$ is said to be in the Kato class $\mathcal{K}(M)$ if
\[ \lim\limits_{r\searrow 0}\sup\limits_{x}\int_{B_{r}(x)} h(d_{g}(x, y)|V(y)|dy=0, \]
where $d_{g}(x, y)$ denotes the geodesic distance between $x$ and $y$, $B_{r}(x)$ means the geodesic ball of radius $r$ around $x$, and the integration is with respect to the volume element of the manifold. The Kato class $\mathcal{K}(\mathbb{R}^{n})$ on $\mathbb{R}^{n}$ is defined similarly.
}, they may not only include two and three dimensional cases in their result, but also extend the range of the exponent $p$ to $2<p\leqslant\infty$ in two and three dimensional cases, and to $2<p<\frac{2n}{n-4}$ in higher dimensional cases, and also take $u$ to be in $\mathrm{Dom}(-\Delta_{g}+V)$. Moreover, in higher dimensional cases ($n\geqslant 4$), when $p\in [\frac{2n}{n-4}, \infty]$, the inequality still holds with an additional term related to the spectral projection operator for $\sqrt{-\Delta_{g}+V}$ corresponding to the interval $[2\lambda, \infty)$ added to the right. There is also an analogous result in the Euclidean case, where the only difference we make is that the $V$ has to be in $L^{\frac{n}{2}}(\mathbb{R}^{n})+L^{\infty}(\mathbb{R}^{n})$ to make the crucial idea of the proof works. 

In this paper, we focus on the Euclidean space, and apply the method and result in Kwon-Lee \cite{KL} and the method in Blair-Sire-Sogge \cite{BSS} to prove the inequality as in Blair-Sire-Sogge \cite{BSS} but for a much wider range of exponent pairs $(p, q)$. Although $L^{2}$ spaces are the most important, other exponent pairs are also of use here and there. What is more, we allow the potential to be in other Lebesgue spaces. Before stating our main theorems and their application, we introduce in detail the paper on resolvent estimates of Kwon-Lee \cite{KL}.

\maketitle
\section{Kwon-Lee's Work: Sharp Resolvent Estimates Outside of the Boundedness Range} \label{Section2}

In the coordinate plane, let $I=\{(x, y)\in [0, 1]\times [0, 1], y\leqslant x\}$ for notational convenience in the future. Define
\begin{equation}
	\mathcal{R}_{0}=\begin{cases}
	\{(x, y)\in\mathbb{R}^2: 0\leqslant x, y\leqslant 1, 0\leqslant x-y<1\} & \text{if}\quad n=2,\\
	\{(x, y)\in\mathbb{R}^2: 0\leqslant x, y\leqslant 1, 0\leqslant x-y\leqslant\frac{2}{n}\}\backslash\{(1, \frac{n-2}{n}), (\frac{2}{n}, 0)\} & \text{if}\quad n>2.
	\end{cases}
\end{equation}
This is the region for exponent pairs for which the resolvent operator norm $\|(-\Delta-z)^{-1}\|_{L^{p}\rightarrow L^{q}}$ is finite for any given $z\in\mathbb{C}\backslash[0, \infty)$.

Now we give the sharp bounds for the resolvent operator when $(\frac{1}{p}, \frac{1}{q})$ is in the above just-stated region. For $n\geqslant 2$, given $(\frac{1}{p}, \frac{1}{q})$, define
\begin{equation}
\gamma(p, q):=\mathrm{max}\big\{0, 1-\frac{n+1}{2}\big(\frac{1}{p}-\frac{1}{q}\big), \frac{n+1}{2}-\frac{n}{p}, \frac{n}{q}-\frac{n-1}{2}\big\}.
\end{equation}
To express $\gamma(p, q)$ more clearly, with a little calculation, we divide the region $I$ of $\mathbb{R}^{2}$ into four parts:
\begin{align}
& \mathcal{U}=\left\{(x, y)\in I, x-y\geqslant\frac{2}{n+1}, x>\frac{n+1}{2n}, y<\frac{n-1}{2n}\right\},\\
& \mathcal{V}=\left\{(x, y)\in I, 0\leqslant x-y<\frac{2}{n+1}, \frac{n-1}{n+1}(1-x)\leqslant y\leqslant\frac{n+1}{n-1}(1-x)\right\},\\
& \mathcal{W}=\left\{(x, y)\in I, y<\frac{n-1}{n+1}(1-x), y\leqslant x<\frac{n+1}{2n}\right\},
\end{align}
and the dual $\mathcal{W}'$ of $\mathcal{W}$.
Set $C=(\frac{1}{2}, \frac{1}{2})$, $B=(\frac{n-1}{2n}, \frac{n-1}{2n})$ and $B'= (\frac{n+1}{2n}, \frac{n+1}{2n})$. For a number of points $L_{1}, L_{2}, \cdots, L_{k}$, let $[L_{1}, L_{2}, \cdots, L_{k}]$ denote the convex hull of them. For two points $M$ and $N$, we also use $[M, N)$ to represent the half open line segment connecting $M$ to $N$, $N$ excluded. $(M, N]$ and $(M, N)$ are defined similarly.
With these points and notations, we define
\begin{align}
& \mathcal{R}_{1}=\mathcal{U}\cap\mathcal{R}_{0},\\
& \mathcal{R}_2=\mathcal{V}\backslash([B, C)\cup[B', C)),\\
& \mathcal{R}_3=\mathcal{W}\cap\mathcal{R}_{0}.
\end{align}
We can then express $\gamma_{p, q}$ case by case:
\begin{equation}
	\gamma_{p, q}=\begin{cases}
	0 & \text{if}\quad (\frac{1}{p}, \frac{1}{q})\in\mathcal{R}_{1},\\
	1-\frac{n+1}{2}(\frac{1}{p}-\frac{1}{q}) & \text{if}\quad (\frac{1}{p}, \frac{1}{q})\in\mathcal{R}_{2},\\
	\frac{n+1}{2}-\frac{n}{p} & \text{if}\quad (\frac{1}{p}, \frac{1}{q})\in\mathcal{R}_{3},\\
	\frac{n}{q}-\frac{n-1}{2} & \text{if}\quad (\frac{1}{p}, \frac{1}{q})\in\mathcal{R}_{3}'.
	\end{cases}
\end{equation}

Finally, for $(\frac{1}{p}, \frac{1}{q})\in\mathcal{R}_{1}\cup\mathcal{R}_{2}\cup\mathcal{R}_{3}\cup\mathcal{R}_{3}'$ and $z\in\mathbb{C}\backslash[0, \infty)$, we set \begin{equation} \label{Kappa}
\kappa_{p, q}(z)=|z|^{\frac{n}{2}(\frac{1}{p}-\frac{1}{q})-1+\gamma_{p, q}}\mathrm{dist} (z, [0, \infty))^{-\gamma_{p, q}}.
\end{equation}
Kwon and Lee conjectured in \cite{KL} that the following resolvent estimates hold whenever $(\frac{1}{p}, \frac{1}{q})\in\mathcal{R}_{1}\cup\mathcal{R}_{2}\cup\mathcal{R}_{3}\cup\mathcal{R}_{3}'$:
\begin{equation} \label{Res}
\|u\|_{L^{q}(\mathbb{R}^{n})}\leqslant C\kappa_{p, q}(z)\|(-\Delta-z)u\|_{L^{p}(\mathbb{R}^{n})}, \quad z\in\mathbb{C}\backslash [0, \infty),
\end{equation}
where the constant $C$ is independent of the complex number $z$. (They also conjectured that \eqref{Res} does not hold for $(\frac{1}{p}, \frac{1}{q})\in [B, C)\cup [B', C)$.) As we pointed out above, they proved this conjecture for almost all pairs $(\frac{1}{p}, \frac{1}{q})$ in $\mathcal{R}_{1}\cup\mathcal{R}_{2}\cup\mathcal{R}_{3}\cup\mathcal{R}_{3}'$, leaving only two small triangles of higher dimensional cases unsolved. More precisely, let $P=(\frac{1}{p_{1}}, \frac{1}{p_{1}})$ where
\begin{equation} \label{p1} \frac{1}{p_{1}}=\begin{cases}
\frac{3(n-1)}{2(3n+1)} & \text{if}\quad n\quad \text{is odd}\\
\frac{3n-2}{2(3n+2)} & \text{if}\quad n\quad \text{is even}
\end{cases},\end{equation}
and $Q=(\frac{1}{q_{1}}, \frac{1}{q_{2}})$ where \begin{equation} \label{q1}
\big(\frac{1}{q_{1}}, \frac{1}{q_{2}}\big)=\begin{cases}
(\frac{(n+5)(n-1)}{2(n^{2}+4n-1)}, \frac{(n-1)(n+3)}{2(n^{2}+4n-1)}) & \text{if}\quad n\quad \text{is odd}\\
(\frac{(n^{2}+3n-6)}{2(n^{2}+3n-2)}, \frac{(n-1)(n+2)}{2(n^{2}+3n-2)}) & \text{if}\quad n\quad \text{is even}
\end{cases}.\end{equation}
The complicated $(\frac{1}{p_{1}}, \frac{1}{p_{1}})$ and $(\frac{1}{q_{1}}, \frac{1}{q_{2}})$ arise from application of the oscillatory integral theorem of Guth, Hickman and Ilopoulou \cite{GHI} and that of the multilinear estimates of Tao \cite{Tao}. Kwon and Lee showed that \eqref{Res} holds for pairs of exponents $(\frac{1}{p}, \frac{1}{q})$ in the region $\mathcal{R}_{1}\cup\tilde{\mathcal{R}}_{2}\cup\tilde{\mathcal{R}}_{3}\cup\tilde{\mathcal{R}}_{3}'$, where
\[\tilde{\mathcal{R}}_{2}=\mathcal{R}_{2}\backslash([B, Q, C]\cup [P', Q', C])\cap\{C\},\]
and
\[\tilde{\mathcal{R}}_{3}=\mathcal{R}_{3}\backslash [B, P, Q].\]
Note that when $n=2$, $\tilde{\mathcal{R}}_{2}=\mathcal{R}_{2}$ and $\tilde{\mathcal{R}}_{3}=\mathcal{R}_{3}$, so the two dimensional conjecture is no longer a conjecture now. A picture of the regions $\mathcal{R}_{1}$, $\tilde{\mathcal{R}}_{2}$, $\tilde{\mathcal{R}}_{3}$ and $\tilde{\mathcal{R}}_{3}'$ in the coordinate plane is provided below (Figure \ref{pqFig}).

\maketitle
\section{Main Theorems and their Application}

We are now in a good position to state our main theorems. We need to restrict ourselves to those complex numbers $z$ so that the resolvent estimates in Kwon-Lee \cite{KL} are uniform, i.e., the bounds do not depend on $z$. To this end, we define the region
\begin{equation}
\mathcal{Z}_{p, q}=\{z\in\mathbb{C}\backslash [0, \infty): \kappa_{p, q}(z)\leqslant 1\}.
\end{equation}
(We may replace the $1$ by any universal constant $M$.) When $z\in\mathcal{Z}_{p, q}$, it follows easily from \eqref{Res} that for $(\frac{1}{p}, \frac{1}{q})\in\mathcal{R}_{1}\cup\tilde{\mathcal{R}_{2}}\cup\tilde{\mathcal{R}_{3}}\cup\tilde{\mathcal{R}_{3}}'$,
\begin{equation} \label{URes}
\|u\|_{L^{q}(\mathbb{R}^{n})}\leqslant C\|(-\Delta-z)u\|_{L^{p}(\mathbb{R}^{n})},
\end{equation}
where the constant $C$ is independent of the complex number $z\in\mathcal{Z}_{p, q}$. We provide pictures of $\mathcal{Z}_{p, q}$ in the complex plane when $(\frac{1}{p}, \frac{1}{q})$ belongs to different regions; see Figure \ref{zFig} below. In that figure, \begin{align*}
& \tilde{\mathcal{R}}_{3, \pm}:=\Big\{(x, y)\in\tilde{\mathcal{R}}_{3}: \pm\Big (x+y-\frac{n-1}{n}\Big )>0\Big\},\\
& \tilde{\mathcal{R}}_{3, 0}:=\Big\{(x, y)\in\tilde{\mathcal{R}}_{3}: \Big (x+y-\frac{n-1}{n}\Big)=0\Big\}.
\end{align*}

\begin{figure}
	\centering
	\includegraphics[height=7.5cm]{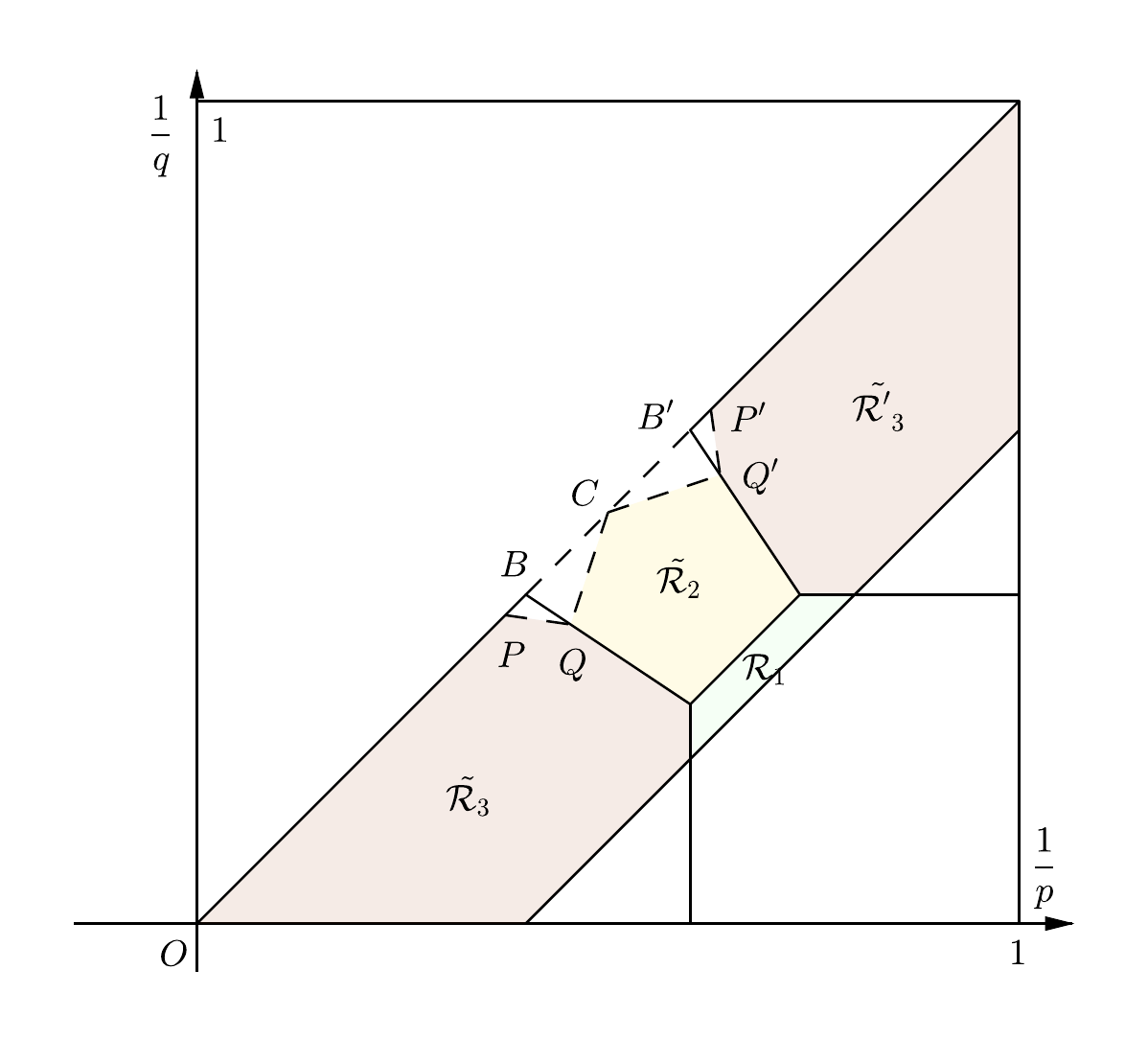}
	\caption{Region for the exponent pair $(\frac{1}{p}, \frac{1}{q})$ in resolvent estimates}
	\label{pqFig}
\end{figure}

\begin{figure}[htbp]
	\centering
	\subfloat[$(\frac{1}{p}, \frac{1}{q})\in\mathcal{R}_{1}$]
	{
		\begin{minipage}[t]{0.5\textwidth}
			\centering
			\includegraphics[width=0.5\textwidth]{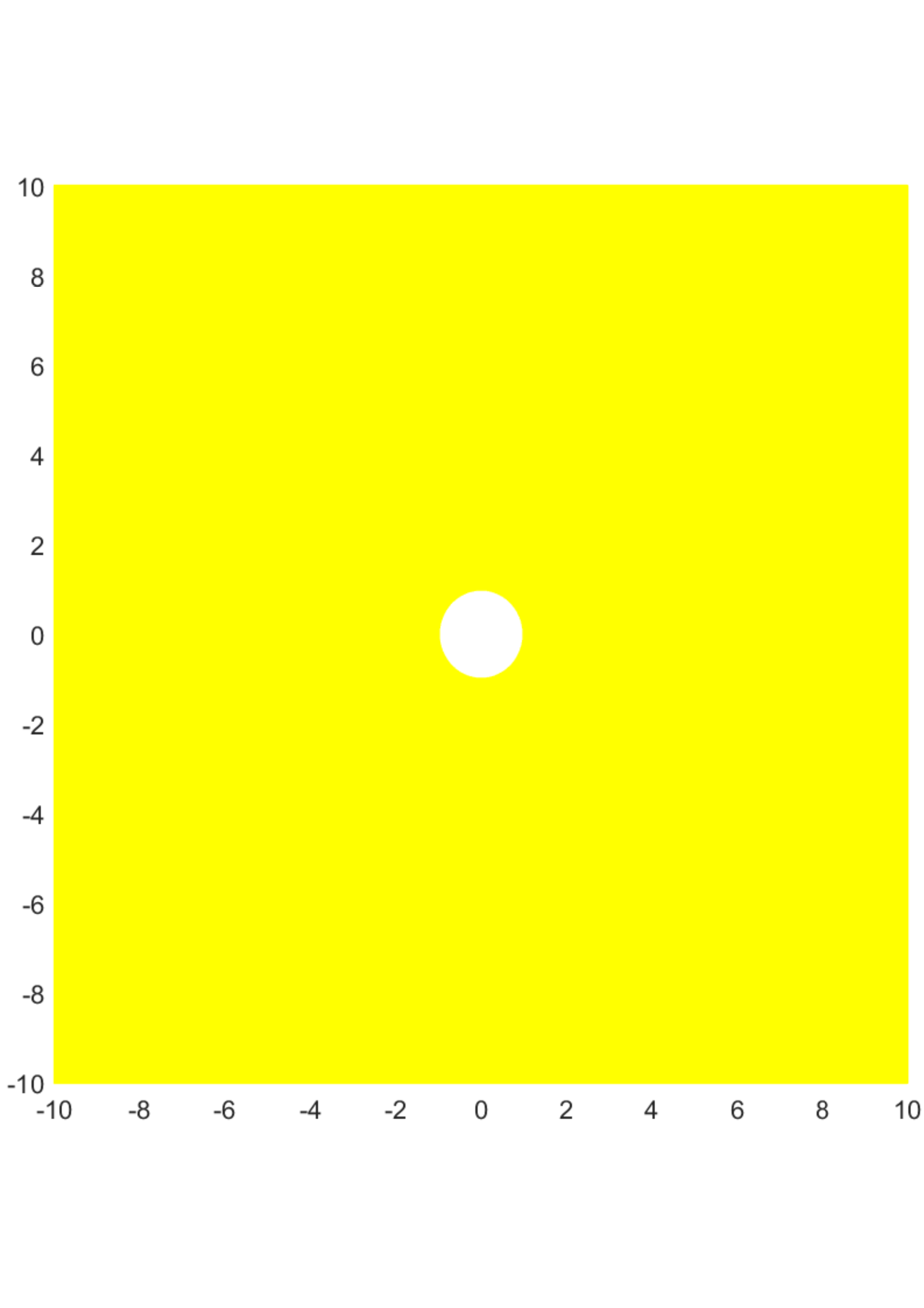}
		\end{minipage}
	}
	\subfloat[$(\frac{1}{p}, \frac{1}{q})\in\tilde{\mathcal{R}}_{2}\backslash \{C\}\cup\tilde{\mathcal{R}}_{3, +}\cup\tilde{\mathcal{R}}'_{3, +}$]
	{
		\begin{minipage}[t]{0.5\textwidth}
			\centering
			\includegraphics[width=0.5\textwidth]{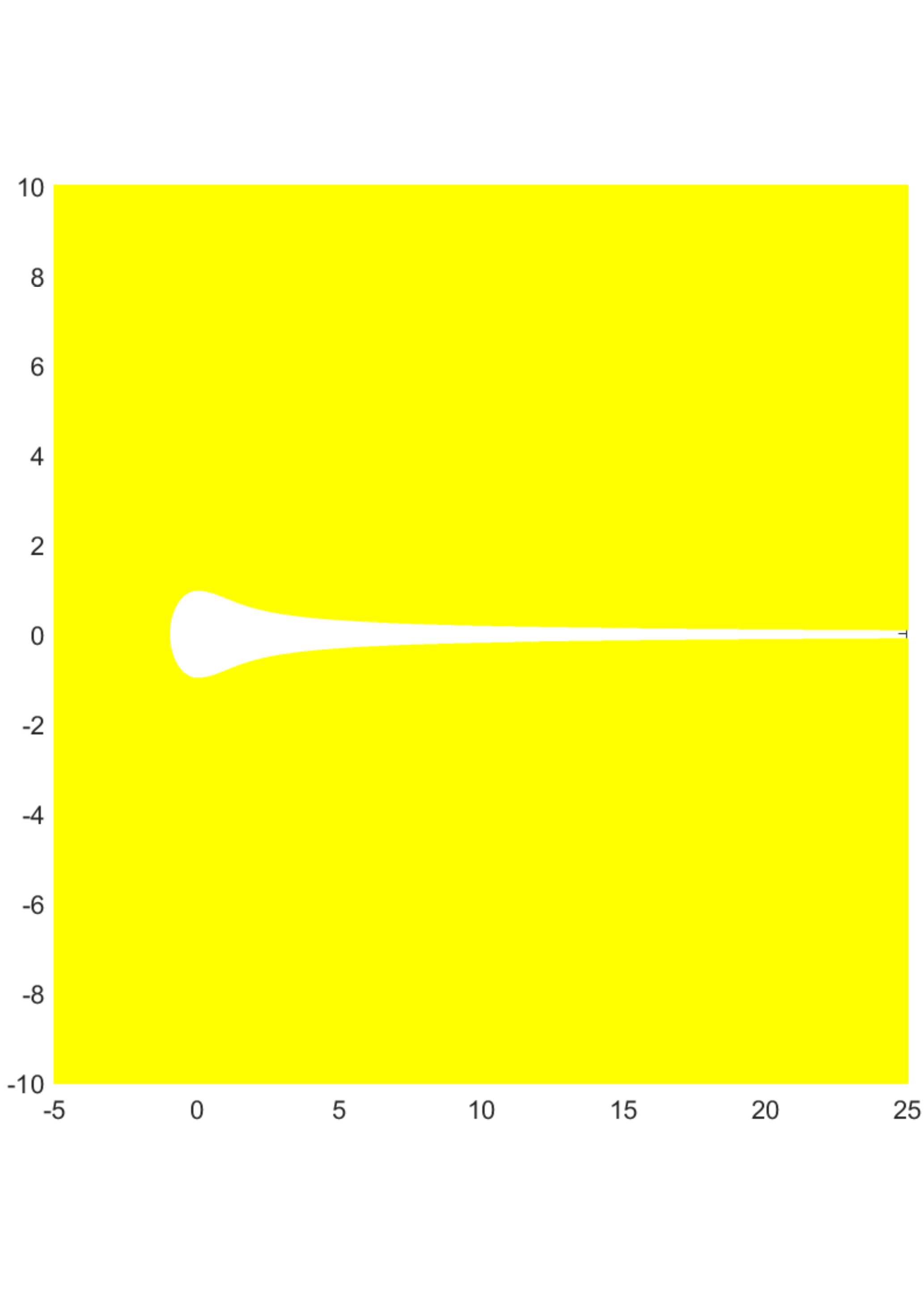}
		\end{minipage}
	}
	
	\subfloat[$(\frac{1}{p}, \frac{1}{q})\in\tilde{\mathcal{R}}_{3, 0}\cup\tilde{\mathcal{R}}'_{3, 0}\cup\{C\}$]
	{
		\begin{minipage}[t]{0.5\textwidth}
			\centering
			\includegraphics[width=0.5\textwidth]{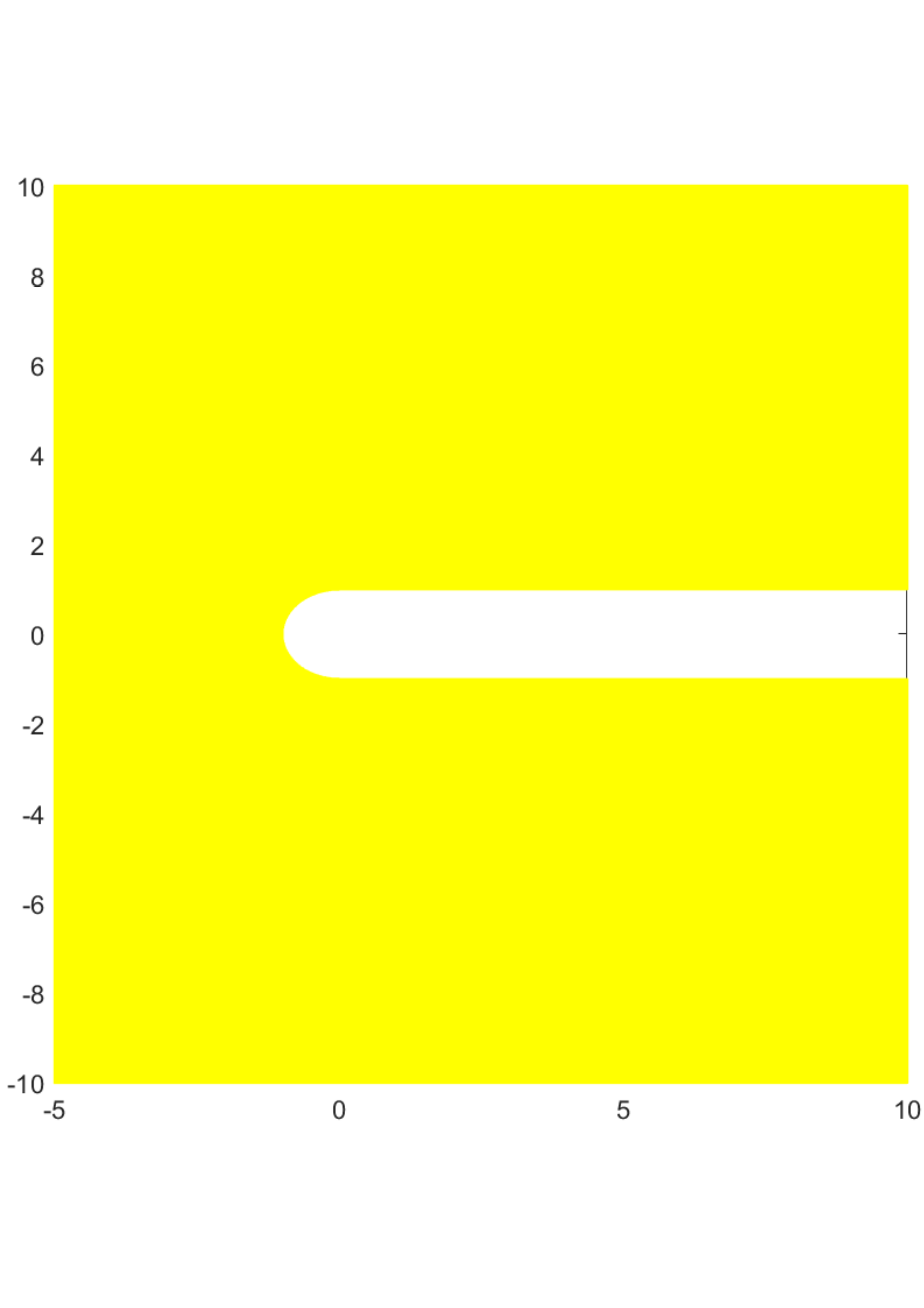}
		\end{minipage}
	}
	\subfloat[$(\frac{1}{p}, \frac{1}{q})\in\tilde{\mathcal{R}}_{3, -}\cup\tilde{\mathcal{R}}'_{3, -}$]
	{	
		\begin{minipage}[t]{0.5\textwidth}
			\centering	
			\includegraphics[width=0.5\textwidth]{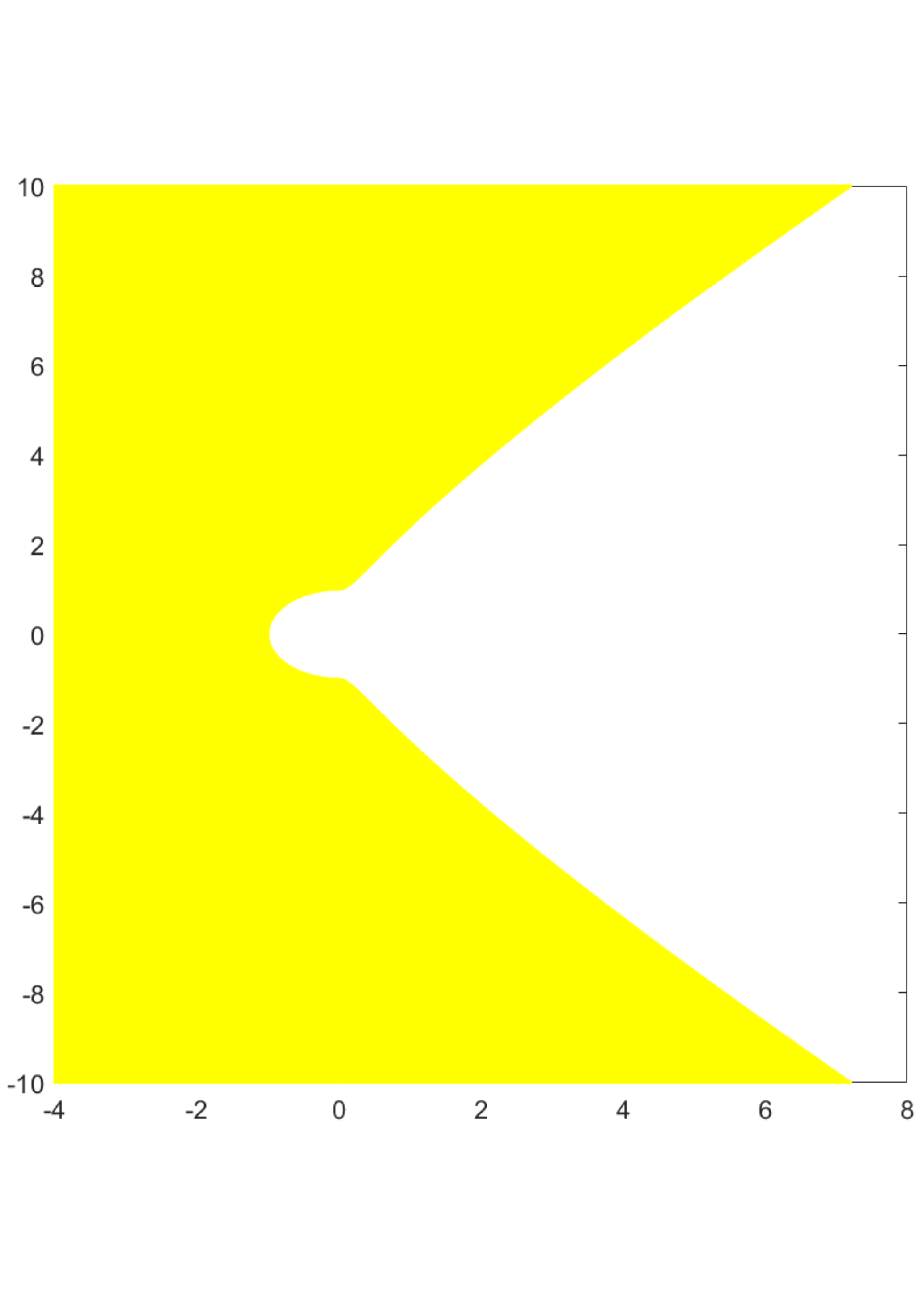}
		\end{minipage}
	}
	\caption{Regions for the complex number $z$ to get uniform estimates in different cases}
	\label{zFig}
\end{figure}

Our theorem is:
\begin{theorem} \label{Main1}
	Suppose that $(\frac{1}{p}, \frac{1}{q})\in\mathcal{R}_{1}\cup\tilde{\mathcal{R}_{2}}\cup\tilde{\mathcal{R}_{3}}\cup\tilde{\mathcal{R}_{3}}'$, $p\neq q$, and $z\in\mathcal{Z}_{p, q}$. Denote $\frac{1}{\sigma}=\frac{1}{p}-\frac{1}{q}$. Suppose also that $V(x)\in (L^{\sigma}(\mathbb{R}^{n})+L^{\infty}(\mathbb{R}^{n}))\cap\mathcal{K}(\mathbb{R}^{n})$. Then there exists a constant $C$, depending only on $n$, $p$, $q$ and $V$, such that the following inequality holds:
	\begin{equation} \label{ResV1}
	\|u\|_{L^{q}(\mathbb{R}^{n})}\leqslant C\kappa_{p, q}(z)(\|(-\Delta+V-z)u\|_{L^{p}(\mathbb{R}^{n})}+|z|^{\frac{1}{2}}\|u\|_{L^{p}(\mathbb{R}^{n})}), \quad u\in L^{q}(\mathbb{R}^{n}).
	\end{equation}
\end{theorem}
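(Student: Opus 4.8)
The plan is to perturb off the potential-free resolvent estimate \eqref{Res}--\eqref{URes} by treating $V$ as a lower-order term via a Neumann-series / absorption argument, exactly in the spirit of Blair--Sire--Sogge \cite{BSS}. Write $w=(-\Delta+V-z)u$, so that $(-\Delta-z)u = w - Vu$. Assuming for the moment that $u$ is nice enough that $(-\Delta-z)u$ lies in $L^p$, the free estimate \eqref{Res} gives
\begin{equation*}
\|u\|_{L^q}\leqslant C\kappa_{p,q}(z)\bigl(\|w\|_{L^p}+\|Vu\|_{L^p}\bigr).
\end{equation*}
The whole game is therefore to dominate $\|Vu\|_{L^p}$ by a small multiple of $\|u\|_{L^q}$ plus a controllable remainder. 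Since $\tfrac1p=\tfrac1q+\tfrac1\sigma$, H\"older gives $\|Vu\|_{L^p}\leqslant \|V\|_{L^\sigma}\|u\|_{L^q}$ for the $L^\sigma$ part of $V$; the point of allowing $V\in L^\sigma+L^\infty$ is that we split $V=V_1+V_2$ with $\|V_1\|_{L^\sigma}$ as small as we like and $V_2\in L^\infty$, and the $V_2$ contribution is handled by the extra term $|z|^{1/2}\|u\|_{L^p}$ after noting that on $\mathcal Z_{p,q}$ one has $\kappa_{p,q}(z)\|V_2\|_\infty \lesssim \kappa_{p,q}(z)|z|^{1/2}$ (or is simply absorbed, using that we only need the estimate modulo the $|z|^{1/2}\|u\|_{L^p}$ term on the right).

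The key steps, in order: (i) reduce to $u$ in a dense class where $(-\Delta+V-z)u\in L^p$ and all norms are finite — this is where the Kato-class hypothesis enters, guaranteeing that $-\Delta+V$ is a well-defined self-adjoint operator with form domain controlled by that of $-\Delta$, so that $\mathrm{Dom}(-\Delta+V)$-type elements and a limiting argument make \eqref{Res} applicable; (ii) apply the free resolvent bound \eqref{URes}/\eqref{Res} to $(-\Delta-z)u=w-Vu$; (iii) split $V=V_1+V_2$, $\|V_1\|_{L^\sigma}<\varepsilon$, $V_2\in L^\infty$, and estimate $\|V_1 u\|_{L^p}\leqslant \varepsilon\|u\|_{L^q}$ by H\"older with exponents $q$ and $\sigma$; (iv) estimate $\|V_2 u\|_{L^p}\leqslant \|V_2\|_\infty\|u\|_{L^p}$ and absorb this into the $|z|^{1/2}\|u\|_{L^p}$ term (using $z\in\mathcal Z_{p,q}$, where $\kappa_{p,q}(z)\leqslant 1$, so $\kappa_{p,q}(z)\|V_2\|_\infty\|u\|_{L^p}$ is of the asserted form once one notes $|z|^{1/2}\gtrsim 1$ on the relevant part of $\mathcal Z_{p,q}$, or otherwise treats the low-frequency part separately); (v) choose $\varepsilon$ so that $C\kappa_{p,q}(z)\varepsilon\leqslant \tfrac12$, which is possible uniformly in $z\in\mathcal Z_{p,q}$ precisely because $\kappa_{p,q}(z)\leqslant 1$ there, and absorb $\tfrac12\|u\|_{L^q}$ to the left; (vi) remove the density assumption by approximation, using the Kato-class mapping properties to pass to the limit.

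The main obstacle I expect is step (i) together with the absorption bookkeeping in step (iv): one must make sure that the class of $u$ for which the argument runs is both dense enough and large enough to yield the stated conclusion for all $u\in L^q$, and that the $L^\infty$-part of $V$ genuinely produces only the $|z|^{1/2}\|u\|_{L^p}$ term rather than an uncontrolled $\|u\|_{L^q}$ term — this is why $V_2$ is peeled off into the $L^p$ norm rather than the $L^q$ norm. A secondary subtlety is that $\kappa_{p,q}(z)\leqslant 1$ on $\mathcal Z_{p,q}$ is exactly what makes the smallness $C\kappa_{p,q}(z)\varepsilon\leqslant\tfrac12$ achievable uniformly; outside $\mathcal Z_{p,q}$ the perturbation argument would fail, which is the structural reason the theorem is stated with that restriction. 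Everything else — the H\"older splitting, the Neumann/absorption step, the limiting argument — is routine once these two points are set up correctly.
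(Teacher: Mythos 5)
Your argument is correct in its essentials, but it takes a genuinely different---and considerably shorter---route than the paper's. The paper does not apply the global Kwon--Lee estimate \eqref{Res} directly to $(-\Delta-z)u=w-Vu$; instead it builds a localized parametrix $H_{z}(x,y)=\eta_{\delta}(x,y)F_{z}(x,y)$ supported in $|x-y|\leqslant\delta$, re-runs the Kwon--Lee multiplier analysis to show that $H_{z}$ obeys the same $L^{p}\to L^{q}$ bounds while the commutator error $R_{z}=-[\eta_{\delta},\Delta]F_{z}$ costs an extra $|z|^{1/2}$ (Propositions \ref{Prop1} and \ref{Prop2}), and then sums $q$-th powers over a lattice of $\delta$-cubes on each of which $\|V\|_{L^{\sigma}(Q_{j}^{\ast})}$ is small. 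In that scheme the term $|z|^{1/2}\|u\|_{L^{p}}$ is produced by $R_{z}$, whereas in yours it is produced by the bounded piece $V_{2}$ after the truncation $V=V_{1}+V_{2}$ with $\|V_{1}\|_{L^{\sigma}}<\varepsilon$; both yield the stated inequality, and the H\"older exponent $\frac1p=\frac1\sigma+\frac1q$ is matched so that your absorption closes with the same $L^{q}$ norm on both sides. Your route buys brevity (no localized kernel estimates, no cube summation), a uniform treatment of $q=\infty$ (the paper needs a separate duality argument there), and in fact never uses the Kato-class hypothesis; the paper's route buys the localized resolvent estimates themselves, which are the real content of its Section 4, and encodes the $V$-dependence of the constant in the local smallness scale $\delta$ rather than in a truncation height $\|V_{2}\|_{L^{\infty}}$.

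Two caveats on your write-up. First, step (i) attributes the a priori justification to the Kato class; what is actually needed is that $u\in L^{q}$ with $(-\Delta-z)u=w-Vu\in L^{p}$ forces $u=(-\Delta-z)^{-1}(w-Vu)$, and this follows from injectivity of $-\Delta-z$ on tempered distributions for $z\notin[0,\infty)$ (a null solution has Fourier transform supported on the empty set $\{|\xi|^{2}=z\}$); the Kato class plays no role in your argument and should not be invoked. Second, absorbing $\|V_{2}\|_{L^{\infty}}\|u\|_{L^{p}}$ into $|z|^{1/2}\|u\|_{L^{p}}$ requires $|z|\gtrsim 1$ on $\mathcal{Z}_{p,q}$. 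This holds whenever $\frac1p-\frac1q<\frac2n$, since then $1\geqslant\kappa_{p,q}(z)\geqslant|z|^{\frac n2(\frac1p-\frac1q)-1}$, but it fails on the Kenig--Ruiz--Sogge segment, where $\kappa_{p,q}\equiv 1$ and $\mathcal{Z}_{p,q}$ contains arbitrarily small $z$; note, however, that the paper's own proof invokes the same assertion that $|z|\geqslant 1$ when $z\in\mathcal{Z}_{p,q}$ right after \eqref{Lq}, so on this point you are on exactly the same footing as the paper.
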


\noindent If we discard the bounded $\kappa_{p, q}(z)$ in Theorem \ref{Main1} (for instance, when considering those $z$ such that $\kappa_{p, q}(z)$ is away from $0$), we obtain immediately the following
\begin{theorem} \label{Main}
	Suppose that $(\frac{1}{p}, \frac{1}{q})\in\mathcal{R}_{1}\cup\tilde{\mathcal{R}_{2}}\cup\tilde{\mathcal{R}_{3}}\cup\tilde{\mathcal{R}_{3}}'$, $p\neq q$, and $z\in\mathcal{Z}_{p, q}$. Denote $\frac{1}{\sigma}=\frac{1}{p}-\frac{1}{q}$. Suppose also that $V(x)\in (L^{\sigma}(\mathbb{R}^{n})+L^{\infty}(\mathbb{R}^{n}))\cap\mathcal{K}(\mathbb{R}^{n})$. Then there exists a constant $C$, depending only on $n$, $p$, $q$ and $V$, such that the following inequality holds:
	\begin{equation} \label{ResV}
	\|u\|_{L^{q}(\mathbb{R}^{n})}\leqslant C(\|(-\Delta+V-z)u\|_{L^{p}(\mathbb{R}^{n})}+|z|^{\frac{1}{2}}\|u\|_{L^{p}(\mathbb{R}^{n})}), \quad u\in L^{q}(\mathbb{R}^{n}).
	\end{equation}
\end{theorem}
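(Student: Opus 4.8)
\emph{Proof strategy.} Theorem~\ref{Main} follows at once from Theorem~\ref{Main1}: for $z\in\mathcal Z_{p,q}$ one has $\kappa_{p,q}(z)\leqslant 1$, so \eqref{ResV1} dominates \eqref{ResV}. Thus the content is Theorem~\ref{Main1}, and the plan is to run the Blair--Sire--Sogge perturbation-and-absorption scheme with the free resolvent bound \eqref{Res} of Kwon--Lee as input. Note that on $I$ with $p\neq q$ one has $p<q$ and $\tfrac1\sigma=\tfrac1p-\tfrac1q\in(0,\tfrac2n]$, hence $\sigma\in[\tfrac n2,\infty)$; the Kato-class hypothesis serves only to make $-\Delta+V-z$ (equivalently, the product $Vu$) meaningful and $-\Delta+V$ self-adjoint, and carries no quantitative weight in the a priori estimate.

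\emph{Splitting the potential and the resolvent identity.} Write $V=W_1+W_2$ with $W_1\in L^\sigma$, $W_2\in L^\infty$, and split $W_1=W_1\mathbf 1_{\{|W_1|>N\}}+W_1\mathbf 1_{\{|W_1|\leqslant N\}}$; by dominated convergence $\|W_1\mathbf 1_{\{|W_1|>N\}}\|_{L^\sigma}\to0$ as $N\to\infty$, while the second piece lies in $L^\infty$. Absorbing it into $W_2$, we may assume $V=V_1+V_2$ with $\|V_1\|_{L^\sigma}$ as small as we like (to be fixed below) and $V_2\in L^\infty$ with a norm depending on $V$ and on that choice. Set $f:=(-\Delta+V-z)u$, so $(-\Delta-z)u=f-V_1u-V_2u$. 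If the right side of \eqref{ResV1} is finite then $u\in L^p\cap L^q$, the expression $f-V_1u-V_2u$ lies in $L^p$, and since $z\notin[0,\infty)=\sigma(-\Delta)$ the tempered distribution $u$ equals $(-\Delta-z)^{-1}(f-V_1u-V_2u)$; applying \eqref{Res} and H\"{o}lder with $\tfrac1p=\tfrac1\sigma+\tfrac1q$,
\begin{equation}\label{planresolvent}
\|u\|_{L^q}\leqslant C_0\,\kappa_{p,q}(z)\bigl(\|f\|_{L^p}+\|V_1\|_{L^\sigma}\|u\|_{L^q}+\|V_2\|_{L^\infty}\|u\|_{L^p}\bigr).
\end{equation}
Since $\kappa_{p,q}(z)\leqslant1$ on $\mathcal Z_{p,q}$, fixing $\|V_1\|_{L^\sigma}<\tfrac1{2C_0}$ lets us absorb the middle term of \eqref{planresolvent} into the left side, leaving $\|u\|_{L^q}\leqslant 2C_0\,\kappa_{p,q}(z)\bigl(\|f\|_{L^p}+\|V_2\|_{L^\infty}\|u\|_{L^p}\bigr)$.

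\emph{Controlling the bounded part.} It remains to bound $\|V_2\|_{L^\infty}\|u\|_{L^p}$ by a multiple of $|z|^{1/2}\|u\|_{L^p}$. From $\mathrm{dist}(z,[0,\infty))\leqslant|z|$ and $\gamma_{p,q}\geqslant0$ one reads off \eqref{Kappa} that $\kappa_{p,q}(z)\geqslant|z|^{\frac n2(\frac1p-\frac1q)-1}$; hence, off the Kenig--Ruiz--Sogge line $\tfrac1p-\tfrac1q=\tfrac2n$ (in particular for every relevant pair when $n=2$), the exponent is negative and $\kappa_{p,q}(z)\leqslant1$ forces $|z|\geqslant1$. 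Then $\|V_2\|_{L^\infty}\|u\|_{L^p}\leqslant\|V_2\|_{L^\infty}\,|z|^{1/2}\|u\|_{L^p}$, and \eqref{ResV1} follows with $C=2C_0\max\{1,\|V_2\|_{L^\infty}\}$.

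\emph{The main obstacle.} On the Kenig--Ruiz--Sogge line (which meets the exponent region only for $n\geqslant3$) one has $\gamma_{p,q}=0$ and $\tfrac n2(\tfrac1p-\tfrac1q)=1$, so $\kappa_{p,q}\equiv1$ and $\mathcal Z_{p,q}=\mathbb C\setminus[0,\infty)$ reaches down to the origin; there the crude last step breaks down for small $|z|$, and this low-energy regime is where I expect the real work to lie. One should then invoke the uniform Kenig--Ruiz--Sogge bound for $(-\Delta-z)^{-1}$ directly and analyse the perturbed operator $I+(-\Delta-z)^{-1}V$ on $L^q$ as $|z|\to0$: the $V_1$-part contributes an operator of small norm (as above), and one must show the $V_2$-part contributes a compact correction and that $I+(-\Delta-z)^{-1}V$ stays boundedly invertible uniformly for $z$ away from $0$, with the loss near $z=0$ absorbed exactly by the $|z|^{1/2}\|u\|_{L^p}$ term --- which is why that term is present on the right of \eqref{ResV1}. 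This is where one uses self-adjointness of $-\Delta+V$ and finiteness of its negative spectrum; it is also where some care about threshold behaviour (e.g.\ the absence of a zero-energy eigenfunction lying in $L^p$) is needed for the estimate to persist up to $z=0$.
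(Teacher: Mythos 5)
Your reduction of Theorem \ref{Main} to Theorem \ref{Main1} is exactly the paper's (discard $\kappa_{p,q}(z)\leqslant 1$), so the content is your proof of Theorem \ref{Main1}, and there you take a genuinely different route. You split the potential globally \emph{by height}, $V=V_1+V_2$ with $\|V_1\|_{L^{\sigma}}$ small and $V_2$ bounded, apply the free Kwon--Lee bound \eqref{Res} directly, absorb the $V_1$-term into the left side, and push $\|V_2\|_{L^{\infty}}\|u\|_{L^{p}}$ into the error term using $|z|\geqslant 1$. The paper instead localizes \emph{in physical space}: it truncates the resolvent kernel to $|x-y|\leqslant\delta$, producing a parametrix $H_z$ and a commutator error $R_z$, proves in Propositions \ref{Prop1} and \ref{Prop2} that $H_z$ inherits the Kwon--Lee bounds while $R_z$ costs an extra $1+|z|^{1/2}$, and then tiles $\mathbb{R}^{n}$ by cubes $Q_j$ of side $\delta$ on each of which $\|V\|_{L^{\sigma}(Q_j^{\ast})}$ is small --- on a small cube even the $L^{\infty}$ part of $V$ has small $L^{\sigma}$ norm, so the \emph{entire} potential is absorbed locally and no term $\|V_2\|_{L^{\infty}}\|u\|_{L^{p}}$ ever appears; the $|z|^{1/2}\|u\|_{L^{p}}$ term comes from $R_z$, not from the potential. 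Where your argument works it is markedly simpler: it bypasses Propositions \ref{Prop1}--\ref{Prop2}, which re-run the Kwon--Lee multiplier analysis for the truncated kernel and constitute the technical heart of the paper, and it also renders the Kato-class hypothesis and the paper's separate duality argument for $q=\infty$ unnecessary.

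The price is the gap you flag but do not close. You need $|z|\gtrsim 1$ to dominate $\|V_2\|_{L^{\infty}}\|u\|_{L^{p}}$ by $|z|^{1/2}\|u\|_{L^{p}}$, and from \eqref{Kappa} the condition $\kappa_{p,q}(z)\leqslant 1$ forces $|z|\geqslant 1$ in all cases \emph{except} when simultaneously $\frac1p-\frac1q=\frac2n$ and $\gamma_{p,q}=0$, i.e.\ on the open Kenig--Ruiz--Sogge segment (nonempty for $n\geqslant 3$). There $\kappa_{p,q}\equiv 1$, so $\mathcal{Z}_{p,q}=\mathbb{C}\setminus[0,\infty)$ contains $z$ of arbitrarily small modulus, and $\|V_2\|_{L^{\infty}}\|u\|_{L^{p}}$ is simply not controlled by $|z|^{1/2}\|u\|_{L^{p}}$: your method fails there, not just your last line. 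The repair you sketch --- invertibility of $I+(-\Delta-z)^{-1}V$ down to $z=0$ via compactness and self-adjointness --- is not carried out and would in any case need spectral hypotheses (absence of zero-energy eigenvalues or resonances) that the theorem does not impose; it is not the right fix. The paper's cube localization sidesteps the issue entirely because the bounded part of $V$ is made small through $|Q_j^{\ast}|^{1/\sigma}$ rather than through a power of $|z|$. (To be fair, the paper's own proof also asserts ``$|z|\geqslant 1$ when $z\in\mathcal{Z}_{p,q}$,'' which likewise fails on that segment, but only to replace $1+|z|^{1/2}$ by $|z|^{1/2}$ in the $R_z$ bound --- a much milder defect.) In summary: your argument is a valid and more elementary proof for every admissible pair with $\frac1p-\frac1q<\frac2n$ or $\gamma_{p,q}>0$ (in particular for all pairs when $n=2$), but on the Kenig--Ruiz--Sogge segment you must either restrict to $|z|\geqslant 1$ or fall back on the paper's localization.
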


If $p=q$, i.e. $\sigma=\infty$, we need additionally assume that $\|V\|_{L^{\infty}(\mathbb{R}^{n})}$ is small enough. Then we would get the better inequality
\[\|u\|_{L^{q}(\mathbb{R}^{n})}\leqslant C(\|(-\Delta+V-z)u\|_{L^{p}(\mathbb{R}^{n})}, \quad u\in L^{q}(\mathbb{R}^{n}).\]
See the proof of Corollary 4.

Before moving on to the proof of the main theorems, we take a look at the application of them in the spectral theory for ``$-\Delta+V$''. First, let $z=(\lambda+i)^{2}$ with $\lambda\geqslant 1$, and assume $(\lambda+i)^{2}\in\mathcal{Z}_{p, q}$ (e.g. this is true for any $\lambda\geqslant 1$ when $(\frac{1}{p}, \frac{1}{q})\notin\tilde{\mathcal{R}}_{3, -}\cup\tilde{\mathcal{R}}'_{3, -}$). Substituting for $z$ in \eqref{ResV1} and recalling the expression \eqref{Kappa} of $\kappa_{p, q}(z)$ yield the following extension of Theorem 9.1 in Blair-Sire-Sogge \cite{BSS}:

\begin{corollary} \label{Cor1}
	Suppose the exponents $p$, $q$ and the potential $V$ are as in Theorems \ref{Main1} and \ref{Main}. Suppose $\lambda\geqslant 1$ is a real number such that $(\lambda+i)^{2}\in\mathcal{Z}_{p, q}$. Then we have
	\begin{multline} \label{Extension}
	\|u\|_{L^{q}(\mathbb{R}^{n})}\leqslant C(\lambda^{n(\frac{1}{p}-\frac{1}{q})-2+\gamma_{p, q}}\|(-\Delta+V-(\lambda+i)^{2})u\|_{L^{p}(\mathbb{R}^{n})}\\
	+\lambda^{n(\frac{1}{p}-\frac{1}{q})-1+\gamma_{p, q}}\|u\|_{L^{p}(\mathbb{R}^{n})}), \quad u\in L^{q}(\mathbb{R}^{n}).
	\end{multline}
	with the constant $C$ independent of $\lambda$ (it only depends on $n$, $p$, $q$ and $V$).
\end{corollary}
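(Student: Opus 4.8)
The plan is to derive Corollary \ref{Cor1} as a direct specialization of Theorem \ref{Main1}, with essentially all the work being a substitution and a clean-up of the power of $\lambda$. First I would set $z=(\lambda+i)^2$ and observe that this is precisely the substitution that connects the ``resolvent form'' $(-\Delta+V-z)$ to the ``spectral form'' $(-\Delta_g+V-(\lambda+i)^2)$ appearing in Blair-Sire-Sogge \cite{BSS}. With $\lambda\geqslant 1$ we have $|z|=|\lambda+i|^2=\lambda^2+1$, so $|z|^{1/2}\asymp\lambda$, and $|z|^{a}\asymp\lambda^{2a}$ for any fixed real exponent $a$, with comparability constants depending only on the exponent (hence only on $n,p,q$). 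Likewise $|z|^{1/2}\|u\|_{L^p}\asymp\lambda\|u\|_{L^p}$, which will turn the $|z|^{1/2}$ factor in \eqref{ResV1} into the $\lambda$ that multiplies the lower-order term in \eqref{Extension}.

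Next I would compute $\kappa_{p,q}(z)$ for this choice of $z$. From the definition \eqref{Kappa}, $\kappa_{p,q}(z)=|z|^{\frac{n}{2}(\frac1p-\frac1q)-1+\gamma_{p,q}}\,\mathrm{dist}(z,[0,\infty))^{-\gamma_{p,q}}$. Here $\mathrm{dist}((\lambda+i)^2,[0,\infty))$ is comparable to the imaginary part $|\mathrm{Im}(\lambda+i)^2|=2\lambda$ (this is a short elementary estimate: for $\lambda\geqslant 1$ the point $(\lambda+i)^2=\lambda^2-1+2i\lambda$ lies at height $2\lambda$ above the positive real axis, and the nearest point of $[0,\infty)$ is within a bounded multiple of that height since $\lambda^2-1\geqslant 0$). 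Therefore
\[
\kappa_{p,q}((\lambda+i)^2)\asymp (\lambda^2)^{\frac{n}{2}(\frac1p-\frac1q)-1+\gamma_{p,q}}\cdot\lambda^{-\gamma_{p,q}}=\lambda^{n(\frac1p-\frac1q)-2+2\gamma_{p,q}-\gamma_{p,q}}=\lambda^{n(\frac1p-\frac1q)-2+\gamma_{p,q}},
\]
which is exactly the power of $\lambda$ on the first term of \eqref{Extension}. Multiplying the $|z|^{1/2}\|u\|_{L^p}\asymp\lambda\|u\|_{L^p}$ term by this same $\kappa_{p,q}$ gives $\lambda^{n(\frac1p-\frac1q)-1+\gamma_{p,q}}\|u\|_{L^p}$, matching the second term. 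Substituting all of this into \eqref{ResV1} yields \eqref{Extension}, with the constant $C$ absorbing only the fixed comparability constants from $|z|\asymp\lambda^2$ and $\mathrm{dist}(z,[0,\infty))\asymp\lambda$, hence depending only on $n,p,q,V$ as claimed. The hypothesis $(\lambda+i)^2\in\mathcal{Z}_{p,q}$ is exactly what lets us invoke Theorem \ref{Main1} in the first place, and the parenthetical remark about $(\frac1p,\frac1q)\notin\tilde{\mathcal{R}}_{3,-}\cup\tilde{\mathcal{R}}'_{3,-}$ is just the observation that in those regions $\mathcal{Z}_{p,q}$ contains the whole curve $\{(\lambda+i)^2:\lambda\geqslant 1\}$, which can be read off from Figure \ref{zFig}.

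There is no real obstacle here; the only point requiring a word of care is the comparability $\mathrm{dist}((\lambda+i)^2,[0,\infty))\asymp\lambda$ for $\lambda\geqslant 1$, and the bookkeeping to make sure the exponents of $\lambda$ combine correctly — in particular that the $\gamma_{p,q}$ contributions from the $|z|$-power and the $\mathrm{dist}$-power partially cancel to leave a single $+\gamma_{p,q}$ rather than $+2\gamma_{p,q}$. I would write the proof as: recall the hypotheses, note $|z|=\lambda^2+1$ and $\mathrm{dist}(z,[0,\infty))\asymp 2\lambda$, substitute into \eqref{ResV1}, simplify the exponents, and conclude.
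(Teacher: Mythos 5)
Your proposal is correct and follows exactly the route the paper takes: substitute $z=(\lambda+i)^2$ into \eqref{ResV1}, note $|z|=\lambda^2+1\asymp\lambda^2$ and $\mathrm{dist}(z,[0,\infty))=2\lambda$ (exact here, since $\mathrm{Re}\,z=\lambda^2-1\geqslant 0$), and simplify $\kappa_{p,q}(z)\asymp\lambda^{n(\frac1p-\frac1q)-2+\gamma_{p,q}}$. The paper treats this as immediate, and your exponent bookkeeping, including the cancellation leaving a single $+\gamma_{p,q}$, is right.
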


\noindent Indeed, one checks by an easy computation that when $p=2$, $2<q\leqslant\infty$ in the $n=2$ and $n=3$ cases and $2<q<\frac{2n}{n-4}$ in the $n>3$ cases, the power $n(\frac{1}{p}-\frac{1}{q})-2+\gamma_{p, q}$ on $\lambda$ in \eqref{Extension} is exactly the $\sigma(q)-1$ in Theorem 9.1 of Blair-Sire-Sogge \cite{BSS}. 

Let $\chi_{\lambda}^{V}$ denote the spectral projection operator associated with $-\Delta+V$ onto the interval $[\lambda^{2}, (\lambda+1)^{2})$. Substituting $u=\chi_{\lambda}^{V}f$ in \eqref{Extension} where $f\in\mathscr{S}(\mathbb{R}^{n})$ and combining the two terms on the right of the resulting inequality give:

\begin{corollary} \label{Cor2}
$p$, $q$, $V$ and $\lambda$ as in Corollary \ref{Cor1}. We have
\begin{equation}
\|\chi_{\lambda}^{V}f\|_{L^{q}(\mathbb{R}^{n})}\leqslant C\lambda^{n(\frac{1}{p}-\frac{1}{q})-1+\gamma_{p, q}}\|\chi_{\lambda}^{V}f\|_{L^{p}(\mathbb{R}^{n})}, \quad f\in\mathscr{S}(\mathbb{R}^{n}).
\end{equation}
\end{corollary}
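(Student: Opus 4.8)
The plan is to derive Corollary~\ref{Cor2} directly from Corollary~\ref{Cor1} by the standard substitution trick for spectral projections, so the only real work is bookkeeping. First I would set $u=\chi_{\lambda}^{V}f$ in the inequality \eqref{Extension}. The crucial observation is that $\chi_{\lambda}^{V}f$ lies in the spectral subspace of $-\Delta+V$ associated to the interval $[\lambda^{2},(\lambda+1)^{2})$, and $(\lambda+i)^{2}=\lambda^{2}-1+2i\lambda$ has real part $\lambda^{2}-1$ sitting just to the left of that interval; hence by the spectral theorem the operator $(-\Delta+V-(\lambda+i)^{2})$ restricted to the range of $\chi_{\lambda}^{V}$ has $L^{2}$-spectrum contained in a band $\{\mu-1-2i\lambda:\mu\in[\lambda^{2},(\lambda+1)^{2})\}$, all of whose points have modulus comparable to $\lambda$ (indeed between $2\lambda$ and a constant multiple of $\lambda$ for $\lambda\geqslant1$). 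Consequently
\[
\|(-\Delta+V-(\lambda+i)^{2})\chi_{\lambda}^{V}f\|_{L^{2}(\mathbb{R}^{n})}\leqslant C\lambda\,\|\chi_{\lambda}^{V}f\|_{L^{2}(\mathbb{R}^{n})}.
\]
To convert this into an $L^{p}$ statement — which is what \eqref{Extension} needs — I would compose with the resolvent: writing $g=(-\Delta+V-(\lambda+i)^{2})\chi_{\lambda}^{V}f$, the above shows $g=(-\Delta+V-(\lambda+i)^{2})h$ with $h=\chi_{\lambda}^{V}f$ and $\|g\|_{L^{2}}\lesssim\lambda\|h\|_{L^{2}}$; but more usefully one simply invokes the fact that $m(-\Delta+V)\chi_{\lambda}^{V}=\tilde m(-\Delta+V)$ for the bounded multiplier $m(\tau)=\tau-(\lambda+i)^{2}$ truncated to $[\lambda^{2},(\lambda+1)^{2})$, and a Bernstein-type / functional-calculus bound gives $\|(-\Delta+V-(\lambda+i)^{2})\chi_{\lambda}^{V}f\|_{L^{p}}\lesssim\lambda\|\chi_{\lambda}^{V}f\|_{L^{p}}$.

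Granting that estimate, I substitute into \eqref{Extension}: the first term on the right becomes
\[
C\lambda^{n(\frac1p-\frac1q)-2+\gamma_{p,q}}\cdot\lambda\,\|\chi_{\lambda}^{V}f\|_{L^{p}(\mathbb{R}^{n})}=C\lambda^{n(\frac1p-\frac1q)-1+\gamma_{p,q}}\|\chi_{\lambda}^{V}f\|_{L^{p}(\mathbb{R}^{n})},
\]
which has exactly the same power of $\lambda$ as the second term $C\lambda^{n(\frac1p-\frac1q)-1+\gamma_{p,q}}\|\chi_{\lambda}^{V}f\|_{L^{p}(\mathbb{R}^{n})}$. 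Adding them and absorbing the numerical constants yields
\[
\|\chi_{\lambda}^{V}f\|_{L^{q}(\mathbb{R}^{n})}\leqslant C\lambda^{n(\frac1p-\frac1q)-1+\gamma_{p,q}}\|\chi_{\lambda}^{V}f\|_{L^{p}(\mathbb{R}^{n})},
\]
which is the claim. The Schwartz hypothesis $f\in\mathscr{S}(\mathbb{R}^{n})$ is only needed to make all the quantities finite and to justify applying \eqref{Extension} with $u=\chi_{\lambda}^{V}f\in L^{q}$; a density argument extends it if desired.

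The step I expect to be the main obstacle is rigorously justifying the bound $\|(-\Delta+V-(\lambda+i)^{2})\chi_{\lambda}^{V}f\|_{L^{p}}\lesssim\lambda\|\chi_{\lambda}^{V}f\|_{L^{p}}$ for $p\neq2$, since the spectral theorem gives it cleanly only on $L^{2}$, and one needs an $L^{p}$ functional-calculus or Bernstein inequality for the operator $-\Delta+V$ adapted to dyadic-type spectral windows. The cleanest route is to note that the multiplier $\tau\mapsto(\tau-(\lambda+i)^{2})\,\mathbf{1}_{[\lambda^{2},(\lambda+1)^{2})}(\tau)$ equals $\lambda\,\Phi_{\lambda}(\tau)$ for a uniformly (in $\lambda$) controlled family $\Phi_{\lambda}$, and that the $L^{p}$-boundedness of $\Phi_{\lambda}(\sqrt{-\Delta+V}\,)$ — or rather of the associated projection-type operators — follows from the heat-kernel/Kato-class machinery already invoked in the paper (this is exactly where $V\in\mathcal{K}(\mathbb{R}^{n})$ enters). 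Alternatively, and perhaps most economically, one avoids the $L^{p}$ multiplier issue entirely: apply \eqref{ResV1} (rather than \eqref{Extension}) with $u=\chi_{\lambda}^{V}f$ and estimate $\|(-\Delta+V-z)\chi_{\lambda}^{V}f\|_{L^{p}}$ together with $\|\chi_{\lambda}^{V}f\|_{L^{p}}$ by iterating the resolvent bound, so that the whole right-hand side collapses to a multiple of $\lambda^{n(\frac1p-\frac1q)-1+\gamma_{p,q}}\|\chi_{\lambda}^{V}f\|_{L^{p}}$; I would present this iteration carefully, since that is where one must be sure no extra powers of $\lambda$ sneak in.
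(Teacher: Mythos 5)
Your argument is precisely the paper's: the author obtains Corollary~\ref{Cor2} by substituting $u=\chi_{\lambda}^{V}f$ into \eqref{Extension} and ``combining the two terms on the right,'' declaring the result obvious. The one step you rightly single out as nontrivial --- the bound $\|(-\Delta+V-(\lambda+i)^{2})\chi_{\lambda}^{V}f\|_{L^{p}}\lesssim\lambda\|\chi_{\lambda}^{V}f\|_{L^{p}}$, which the spectral theorem delivers cleanly only for $p=2$ --- is left entirely implicit in the paper (just as in the parallel step of Corollary~\ref{Cor4}), so your proposal, which at least names the issue and sketches routes to an $L^{p}$ functional-calculus justification, is no less complete than the paper's own proof.
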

\noindent This extends estimate (9.3) in Blair-Sire-Sogge \cite{BSS}.

We can actually improve the above results in some sense. At the end of this article, we will prove the following corollary that extends Theorem 9.2 in \cite{BSS}:
\begin{corollary} \label{Cor3}
	$p$, $q$ and $V$ as in Theorems \ref{Main1} and \ref{Main}. Suppose $\lambda>0$ and $0<\mu<\frac{\lambda}{2}$ are real numbers such that $(\lambda+i\mu)^{2}\in\mathcal{Z}_{p, q}$. Then if $\|V\|_{L^{\sigma}(\mathbb{R}^{n})}$ is small enough, we have
	\begin{equation} \label{Extension2}
	\|u\|_{L^{q}(\mathbb{R}^{n})}\leqslant C\lambda^{n(\frac{1}{p}-\frac{1}{q})-2+\gamma_{p, q}}\mu^{-\gamma_{p, q}}\|(-\Delta+V-(\lambda+i\mu)^{2})u\|_{L^{p}(\mathbb{R}^{n})}, \quad u\in L^{q}(\mathbb{R}^{n}).
	\end{equation}
\end{corollary}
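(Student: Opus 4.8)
\textbf{Proof proposal for Corollary \ref{Cor3}.}

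The plan is to run a standard perturbation (Neumann series / fixed-point) argument, but this time keeping careful track of the powers of $\lambda$ and $\mu$ instead of just asking for a bounded $\kappa_{p,q}(z)$. Write $z=(\lambda+i\mu)^2$, and abbreviate $R_0(z)=(-\Delta-z)^{-1}$, the free resolvent. Since $0<\mu<\lambda/2$ we have $|z|\simeq\lambda^2$ and $\mathrm{dist}(z,[0,\infty))\simeq\lambda\mu$, so by \eqref{Kappa}
\[
\kappa_{p,q}(z)\simeq\lambda^{n(\frac1p-\frac1q)-2+\gamma_{p,q}}\,\mu^{-\gamma_{p,q}},
\]
which is exactly the constant appearing on the right of \eqref{Extension2}. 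The point of the hypothesis $(\lambda+i\mu)^2\in\mathcal Z_{p,q}$ is that this quantity is $\leqslant 1$, so \eqref{URes} applies and $\|R_0(z)\|_{L^p\to L^q}\lesssim\kappa_{p,q}(z)\leqslant 1$. First I would also record the diagonal bound $\|R_0(z)\|_{L^\sigma\to L^\sigma}$ (or, more precisely, the bound for multiplication by $V$ composed with $R_0$): by Hölder, multiplication by $V$ maps $L^q\to L^p$ with norm $\|V\|_{L^\sigma}$ (using $\frac1p=\frac1q+\frac1\sigma$), so
\[
\|\,V R_0(z)\,\|_{L^p\to L^p}\leqslant \|V\|_{L^\sigma}\,\|R_0(z)\|_{L^p\to L^q}\lesssim \|V\|_{L^\sigma}\,\kappa_{p,q}(z)\leqslant \|V\|_{L^\sigma}.
\]
For $V$ genuinely in $L^\sigma+L^\infty$ one splits $V=V_1+V_2$ with $\|V_1\|_{L^\sigma}$ as small as we like and $\|V_2\|_{L^\infty}$ finite; the $L^\infty$ part is absorbed into $z$ exactly as in the proof of Theorem \ref{Main1} (shifting the spectral parameter, using the Kato-class hypothesis to keep $-\Delta+V$ self-adjoint and to justify that $u\in L^q$ together with the equation forces enough regularity). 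The reduction is thus to the case $V\in L^\sigma$ with $\|V\|_{L^\sigma}$ small.

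Next I would set up the resolvent identity. Let $f=(-\Delta+V-z)u\in L^p$. Formally
\[
u=R_0(z)f-R_0(z)Vu,
\]
so $(I+R_0(z)V)u=R_0(z)f$, or dually $u=(I+R_0(z)V)^{-1}R_0(z)f$. The operator $R_0(z)V$ acts on $L^q$: it is the composition $L^q\xrightarrow{V}L^p\xrightarrow{R_0(z)}L^q$, hence
\[
\|R_0(z)V\|_{L^q\to L^q}\leqslant \|R_0(z)\|_{L^p\to L^q}\,\|V\|_{L^\sigma}\lesssim \kappa_{p,q}(z)\,\|V\|_{L^\sigma}\leqslant \|V\|_{L^\sigma}.
\]
Choosing $\|V\|_{L^\sigma}$ small enough (smaller than the reciprocal of the implicit constant) makes $\|R_0(z)V\|_{L^q\to L^q}\leqslant\frac12$, so $I+R_0(z)V$ is invertible on $L^q$ with $\|(I+R_0(z)V)^{-1}\|_{L^q\to L^q}\leqslant 2$ by Neumann series. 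Therefore
\[
\|u\|_{L^q}\leqslant 2\,\|R_0(z)f\|_{L^q}\leqslant 2\,\|R_0(z)\|_{L^p\to L^q}\,\|f\|_{L^p}\lesssim \kappa_{p,q}(z)\,\|(-\Delta+V-z)u\|_{L^p},
\]
which combined with the estimate $\kappa_{p,q}(z)\simeq\lambda^{n(\frac1p-\frac1q)-2+\gamma_{p,q}}\mu^{-\gamma_{p,q}}$ gives exactly \eqref{Extension2}. Note the gain over Corollary \ref{Cor1}: because we invert the perturbation rather than moving $Vu$ to the right-hand side and estimating it by $\|u\|_{L^p}$, there is no surviving $|z|^{1/2}\|u\|_{L^p}$ term — this is precisely why the smallness of $\|V\|_{L^\sigma}$ is needed and why the statement is an improvement "in some sense."

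The main obstacle I anticipate is not the Neumann series itself but the functional-analytic bookkeeping that makes it rigorous: justifying that for $u\in L^q$ with $(-\Delta+V-z)u\in L^p$ the formal identity $u=R_0(z)f-R_0(z)Vu$ actually holds as an $L^q$ identity (one needs $Vu\in L^p$, which follows from Hölder since $u\in L^q$ and $V\in L^\sigma$, and one needs $-\Delta u$ to make sense, which is where the Kato-class hypothesis and the a priori membership $u\in L^q$ enter, exactly as in Theorem \ref{Main1}); and, for the $L^\sigma+L^\infty$ decomposition, checking that the spectral shift absorbing $V_2$ only perturbs $z$ by an $O(1)$ amount, hence changes $|z|$ and $\mathrm{dist}(z,[0,\infty))$ by comparable amounts, so that $\kappa_{p,q}$ is unaffected up to constants and the hypothesis $(\lambda+i\mu)^2\in\mathcal Z_{p,q}$ is preserved (possibly after enlarging the universal constant $M$ in the definition of $\mathcal Z_{p,q}$). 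Once these points are handled the same way they are in the proof of Theorem \ref{Main1}, the rest is the short computation above. \qed
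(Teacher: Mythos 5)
Your proposal is correct and follows essentially the same route as the paper: the identity $u=R_0(z)(-\Delta+V-z)u-R_0(z)(Vu)$, the Kwon--Lee bound $\kappa_{p,q}((\lambda+i\mu)^2)\simeq\lambda^{n(\frac1p-\frac1q)-2+\gamma_{p,q}}\mu^{-\gamma_{p,q}}$ (using $|z|\simeq\lambda^2$, $\mathrm{dist}(z,[0,\infty))\simeq\lambda\mu$), H\"older with $\|V\|_{L^\sigma}$ small, and no cube decomposition. Your Neumann-series inversion of $I+R_0(z)V$ on $L^q$ is just a repackaging of the paper's absorption of $\tfrac12\|u\|_{L^q}$ into the left-hand side, so the two arguments coincide.
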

\noindent Let $\chi_{[\lambda, \lambda+\mu)}^{V}$ denote the spectral projection operator associated with $-\Delta+V$ corresponding to the interval $[\lambda^{2}, (\lambda+\mu)^{2})$. Plugging $\chi_{[\lambda, \lambda+\mu)}^{V}f$ in \eqref{Extension2} gives the following extension of Corollary 9.4 in \cite{BSS}:

\begin{corollary} \label{Cor4}
	$p$, $q$, $V$, $\lambda$ and $\mu$ as in Corollary \ref{Cor3}, then we have
	\begin{equation} \label{spectral}
	\|\chi_{[\lambda, \lambda+\mu)}^{V}f\|_{L^{q}(\mathbb{R}^{n})}\leqslant C\lambda^{n(\frac{1}{p}-\frac{1}{q})-1+\gamma_{p, q}}\mu^{1-\gamma_{p, q}}\|\chi_{[\lambda, \lambda+\mu)}^{V}f\|_{L^{p}(\mathbb{R}^{n})}, \quad f\in\mathscr{S}(\mathbb{R}^{n}).
	\end{equation} 
	In addition, if $\mu$ is bounded away from $0$, i.e. $\mu\geqslant\delta_{0}$ for some $\delta_{0}>0$, it follows that
	\begin{equation}
	\|\chi_{[0, 2\mu)}^{V}f\|_{L^{q}(\mathbb{R}^{n})}\leqslant C\mu^{n(\frac{1}{p}-\frac{1}{q})}\|\chi_{[0, 2\mu)}^{V}f\|_{L^{p}(\mathbb{R}^{n})}, \quad f\in\mathscr{S}(\mathbb{R}^{n}).
	\end{equation}
	The constants $C$ in the conclusions are independent from $\lambda$ and $\mu$.
\end{corollary}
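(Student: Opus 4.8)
The plan is to deduce Corollary \ref{Cor4} from Corollary \ref{Cor3} by the standard trick of testing the resolvent estimate on spectral clusters. First I would substitute $u=\chi_{[\lambda,\lambda+\mu)}^{V}f$ into \eqref{Extension2}. Since $\chi_{[\lambda,\lambda+\mu)}^{V}$ is the spectral projection for $-\Delta+V$ onto $[\lambda^{2},(\lambda+\mu)^{2})$, the spectral theorem gives
\[
\|(-\Delta+V-(\lambda+i\mu)^{2})\chi_{[\lambda,\lambda+\mu)}^{V}f\|_{L^{2}}\leqslant \Big(\sup_{t\in[\lambda^{2},(\lambda+\mu)^{2})}|t-(\lambda+i\mu)^{2}|\Big)\|\chi_{[\lambda,\lambda+\mu)}^{V}f\|_{L^{2}},
\]
and a short computation shows $|t-(\lambda+i\mu)^{2}|=|t-\lambda^{2}+\mu^{2}|+O(\lambda\mu)\lesssim \lambda\mu$ uniformly for $t$ in that interval, using $0<\mu<\lambda/2$. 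However, \eqref{Extension2} has an $L^{p}$ norm on the right, not $L^{2}$, so I must instead argue directly at the $L^{p}$ level. The cleanest way is to note that $(-\Delta+V-(\lambda+i\mu)^{2})\chi_{[\lambda,\lambda+\mu)}^{V}$ is, via the functional calculus, the operator $m(-\Delta+V)$ with $m(t)=(t-(\lambda+i\mu)^{2})\mathbbm{1}_{[\lambda^{2},(\lambda+\mu)^{2})}(t)$; one then writes this as $(\lambda\mu)$ times a bounded spectral multiplier and invokes the $L^{p}$-boundedness of spectral projections of $-\Delta+V$ that underlies Corollary \ref{Cor3} (this is where the Kato-class hypothesis and the heat-kernel/Gaussian-bound machinery enter), giving
\[
\|(-\Delta+V-(\lambda+i\mu)^{2})\chi_{[\lambda,\lambda+\mu)}^{V}f\|_{L^{p}}\leqslant C\lambda\mu\,\|\chi_{[\lambda,\lambda+\mu)}^{V}f\|_{L^{p}}.
\]

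Feeding this into \eqref{Extension2} yields
\[
\|\chi_{[\lambda,\lambda+\mu)}^{V}f\|_{L^{q}}\leqslant C\lambda^{n(\frac1p-\frac1q)-2+\gamma_{p,q}}\mu^{-\gamma_{p,q}}\cdot\lambda\mu\,\|\chi_{[\lambda,\lambda+\mu)}^{V}f\|_{L^{p}}=C\lambda^{n(\frac1p-\frac1q)-1+\gamma_{p,q}}\mu^{1-\gamma_{p,q}}\|\chi_{[\lambda,\lambda+\mu)}^{V}f\|_{L^{p}},
\]
which is \eqref{spectral}. For the second assertion, I take $\lambda=\mu$ (legitimate since one only needs $0<\mu<\lambda/2$ to be relaxed to $\mu\leqslant\lambda$ at the endpoint, or alternatively a direct limiting/rescaling argument), so that $[\lambda^{2},(\lambda+\mu)^{2})=[\mu^{2},4\mu^{2})$, and then sum a geometric decomposition: write $\chi_{[0,2\mu)}^{V}=\sum_{j\geqslant 0}\chi_{[2^{-j-1}\mu,2^{-j}\mu)}^{V}$ (these correspond to dyadic intervals $[\cdot^{2},\cdot^{2})$ in the spectral variable), apply \eqref{spectral} to each piece with $\lambda\sim\mu_{j}:=2^{-j}\mu$ and $\mu\sim\mu_{j}$, obtaining a bound $C\mu_{j}^{n(\frac1p-\frac1q)}$, and sum in $j$. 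The geometric series $\sum_{j}(2^{-j}\mu)^{n(\frac1p-\frac1q)}$ converges precisely because $p\neq q$ forces $n(\frac1p-\frac1q)>0$, and its sum is $\lesssim \mu^{n(\frac1p-\frac1q)}$; the hypothesis $\mu\geqslant\delta_{0}$ guarantees every relevant $(\lambda+i\mu)^{2}$ stays in $\mathcal{Z}_{p,q}$ so that \eqref{Extension2} is applicable at each scale, and an almost-orthogonality estimate $\sum_{j}\|\chi_{I_{j}}^{V}f\|_{L^{p}}\lesssim\|\chi_{[0,2\mu)}^{V}f\|_{L^{p}}$ (again from the functional calculus, Gaussian bounds, and the Kato-class assumption, i.e. Bochner--Riesz/Littlewood--Paley-type summation for $-\Delta+V$) closes the estimate.

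The main obstacle I anticipate is the passage from the $L^{2}$ spectral-theorem argument to an $L^{p}$ one: one needs that spectral projections $\chi_{I}^{V}$ associated with $-\Delta+V$ are bounded on $L^{p}$ with constants uniform in the interval $I$, and that the truncated multiplier $m(t)=(t-(\lambda+i\mu)^{2})\mathbbm{1}_{I}(t)/(\lambda\mu)$ defines a uniformly $L^{p}$-bounded operator. Both facts are exactly the kind of statement the hypothesis $V\in(L^{\sigma}+L^{\infty})\cap\mathcal{K}(\mathbb{R}^{n})$ is designed to supply — the Kato condition yields Gaussian heat-kernel bounds for $e^{-t(-\Delta+V)}$, hence a functional calculus with $L^{p}\to L^{p}$ bounds for nice multipliers — so the work is to cite or sketch the relevant multiplier theorem rather than to prove anything fundamentally new. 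The smallness of $\|V\|_{L^{\sigma}}$ from Corollary \ref{Cor3} is inherited automatically; the only genuinely new bookkeeping is verifying that the dyadic pieces in the final summation each satisfy the $\mathcal{Z}_{p,q}$-membership condition, which follows from $\mu\geqslant\delta_{0}$ and the explicit form \eqref{Kappa} of $\kappa_{p,q}$.
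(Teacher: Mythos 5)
Your treatment of the first estimate \eqref{spectral} is essentially the paper's: substitute $u=\chi_{[\lambda,\lambda+\mu)}^{V}f$ into \eqref{Extension2} and bound $\|(-\Delta+V-(\lambda+i\mu)^{2})\chi_{[\lambda,\lambda+\mu)}^{V}f\|_{L^{p}}\lesssim\lambda\mu\|\chi_{[\lambda,\lambda+\mu)}^{V}f\|_{L^{p}}$. The subtlety you flag --- that the spectral theorem only gives this for $p=2$ and that for general $p$ one needs an $L^{p}$-bounded functional calculus for $-\Delta+V$ --- is real; the paper simply writes ``follows from the spectral theorem'' and does not address it, so your explicit acknowledgement that a multiplier theorem must be cited is a fair (indeed more careful) reading of the same step.

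For the second estimate, however, you take a genuinely different route, and it has a gap. The paper's argument is much shorter: set $\lambda=2\mu$ and plug $u=\chi_{[0,2\mu)}^{V}f$ \emph{directly} into \eqref{Extension2}. This is legitimate because \eqref{Extension2} holds for arbitrary $u\in L^{q}$, not only for single spectral clusters; since the projection is onto $[0,4\mu^{2})$ and $|\tau^{2}-(2\mu+i\mu)^{2}|\approx\mu^{2}$ there, the prefactor $\mu^{n(\frac1p-\frac1q)-2}$ combines with the $\mu^{2}$ from the multiplier to give $\mu^{n(\frac1p-\frac1q)}$, with no decomposition at all. Your dyadic decomposition $\chi_{[0,2\mu)}^{V}=\sum_{j}\chi_{I_{j}}^{V}$ is not only unnecessary but creates two problems you cannot dismiss. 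First, the pieces at scale $2^{-j}\mu$ require $(\lambda_{j}+i\mu_{j})^{2}\in\mathcal{Z}_{p,q}$ with $\lambda_{j},\mu_{j}\to 0$; the hypothesis $\mu\geqslant\delta_{0}$ controls only the top scale, and from \eqref{Kappa} one sees $\kappa_{p,q}(z)\to\infty$ as $|z|\to 0$ whenever $\frac{n}{2}(\frac1p-\frac1q)-1<0$, so the small-scale pieces can fall outside $\mathcal{Z}_{p,q}$ and \eqref{Extension2} is not applicable to them. Second, the closing ``almost-orthogonality'' bound $\|\chi_{I_{j}}^{V}f\|_{L^{p}}\lesssim\|\chi_{[0,2\mu)}^{V}f\|_{L^{p}}$ uniformly in $j$ is an $L^{p}$ Littlewood--Paley statement for $-\Delta+V$ that is nowhere established and is of a different order of difficulty than anything else in the proof. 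Replacing the whole decomposition by the paper's single substitution at $\lambda=2\mu$ removes both issues.
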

\noindent The estimates in the above corollaries are closely related to the Stein-Tomas restriction theorem and the Strichartz estimates.

\maketitle
\section{Proof of the Main Theorems}
It suffices to prove Theorem \ref{Main1}. To prove Theorem \ref{Main1}, we basically follow the ideas of Blair-Sire-Sogge \cite{BSS}. However, their proof uses the expression and estimates of the kernel of the resolvent operator in Kenig-Ruiz-Sogge's 1986 paper \cite{KRS}, but the proof of Kwon-Lee \cite{KL} is a novel one, applying recent techniques such as the oscillatory integral estimates of Guth-Hickman-Iliopoulou \cite{GHI} and the bilinear estimates of Tao \cite{Tao}. For this reason, many lines need to be carefully justified to ensure that Blair-Sire-Sogge \cite{BSS}'s proof combines well with this new method.

We deal with cases when $q\neq\infty$ first. In this situation, we will see that we do not even need the condition that $V(x)$ be in the Kato class.

Let \begin{equation}
F_{z}(x,y)=\frac{1}{(2\pi)^{n}}\int_{\mathbb{R}^{n}} \frac{e^{i(x-y)\cdot\xi}}{|\xi|^{2}-z}d\xi
\end{equation}
denote the kernel for the resolvent operator $(-\Delta-z)^{-1}$. The idea is to break $\mathbb{R}^{n}$ into small cubes on each of which the $L^{\sigma}$ norm of $V$ is small enough and then sum them up. To realize this, we introduce an $\eta\in C_{0}^{\infty}(\mathbb{R}^{n})$ that is supported in $[-1, 1]$ and equals $1$ in $[-\frac{1}{2}, \frac{1}{2}]$, and set $\eta_{\delta}(x, y)=\eta(\frac{|x-y|}{\delta})$ where the $\delta$ is to be specified later. Let
\begin{equation}
H_{z}(x, y)=\eta_{\delta}(x, y)F_{z}(x, y).
\end{equation}
We have by an easy calculation \begin{equation} \label{1}
(-\Delta-z)H_{z}(x, y)=\delta_{y}(x)+[\eta_{\delta}(x, y), \Delta]F_{z}(x, y).
\end{equation}
Taking adjoints in the above equality, we get \begin{equation} \label{id}
I=H_{z}\circ (-\Delta-z)+R_{z},
\end{equation}
where $H_{z}$ is the operator whose kernel is $H_{z}(x, y$) and $R_{z}$ corresponds to $-[\eta_{\delta}(x, y), \Delta]F_{z}(x, y)$.
To fulfill the ideas in Blair-Sire-Sogge \cite{BSS}, we need estimates of $H_{z}$ and $R_{z}$, and this is where we must be careful.

\begin{proposition} \label{Prop1}
	Let $(\frac{1}{p}, \frac{1}{q})$ be as in Theorem \ref{Main1} and $z\in\mathbb{C}\backslash[0, \infty)$. Then $H_{z}$ bears the same estimates as the resolvent operator $F_{z}$:
	\begin{equation}
	\|H_{z}f\|_{L^{q}(\mathbb{R}^{n})}\leqslant C\kappa_{p, q}(z)\|f\|_{L^{p}(\mathbb{R}^{n})}, \quad u\in C_{0}^{\infty}(\mathbb{R}^{n}),
	\end{equation}
	with the constant $C$ depending only on $n$, $p$ and $q$ but not on $z$ and $\delta$.
 \end{proposition}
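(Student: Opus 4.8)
The plan is to show that $H_z = F_z - (1-\eta_\delta) F_z$ differs from the full resolvent $F_z$ by an operator whose kernel $(1-\eta_\delta(x,y))F_z(x,y)$ is supported where $|x-y|\geqslant \delta/2$, and then to exploit that on this far-away region the resolvent kernel $F_z$ is a smooth, rapidly decaying function (for $z\in\mathbb{C}\backslash[0,\infty)$ the oscillatory integral defining $F_z$ is non-stationary, or one uses the classical Bessel-type formula for $F_z$). Concretely, I would write
\[
\|H_z f\|_{L^q} \leqslant \|F_z f\|_{L^q} + \|(1-\eta_\delta)F_z f\|_{L^q},
\]
use the Kwon–Lee bound $\|F_z f\|_{L^q}\leqslant C\kappa_{p,q}(z)\|f\|_{L^p}$ for the first term (this is \eqref{Res}, valid in the stated region), and bound the second term by controlling the kernel $G_z(x,y):=(1-\eta_\delta(x,y))F_z(x,y)$ directly.

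The key steps, in order: First, recall the pointwise kernel bounds for $F_z$. Writing $z = |z| e^{i\theta}$ with $\theta\in(0,2\pi)$ and setting $w=\sqrt{z}$ with $\operatorname{Im} w>0$, the kernel $F_z(x,y)$ equals an explicit constant times $|w|^{n-2}(|w||x-y|)^{-\frac{n-2}{2}}H^{(1)}_{\frac{n-2}{2}}(w|x-y|)$, which for $|x-y|\gtrsim \delta$ enjoys exponential decay governed by $\operatorname{Im} w \sim \operatorname{dist}(z,[0,\infty))/|z|^{1/2}$ together with algebraic factors. Second, I would split into the regime $|z|\delta^2 \lesssim 1$ and $|z|\delta^2 \gtrsim 1$: in the former, the kernel behaves like the Laplacian's and the cutoff removes the near-diagonal singularity, leaving a kernel bounded by $C|x-y|^{-(n-2)}\mathbbm{1}_{|x-y|\geqslant\delta/2}$ plus harmless terms; in the latter, one gains from the exponential/oscillatory decay. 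Third, I would estimate $\|G_z f\|_{L^q}$ either by the Hardy–Littlewood–Sobolev / Young inequality on the resulting convolution kernel, or — to keep uniformity in $z$ — by interpolating the crude $L^1\to L^\infty$, $L^1\to L^1$, $L^\infty\to L^\infty$ bounds that come from integrating $|G_z(x,y)|$ in $x$ or $y$, and checking that the resulting constant is dominated by $\kappa_{p,q}(z)$. Finally, sum the two contributions.

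The main obstacle I anticipate is the \emph{uniformity in both $z$ and $\delta$}: one must verify that the correction term $\|(1-\eta_\delta)F_z f\|_{L^q}$ is bounded by $C\kappa_{p,q}(z)\|f\|_{L^p}$ with $C$ independent of $\delta$, even though the cutoff scale $\delta$ will eventually be chosen depending on $z$ (roughly $\delta\sim|z|^{-1/2}$ or adapted to the local smallness of $\|V\|_{L^\sigma}$). This forces a careful bookkeeping of how the kernel bounds interact with the Lebesgue exponents: for $(\frac1p,\frac1q)$ on the scaling line $\frac1p-\frac1q=\frac2n$ the convolution estimate for $|x-y|^{-(n-2)}\mathbbm{1}_{|x-y|\geqslant\delta/2}$ is scale-invariant and matches $\kappa_{p,q}$, but off that line one genuinely needs the decay of $F_z$ (not just the free kernel) to close, and one must confirm the exponent of $|z|$ and of $\operatorname{dist}(z,[0,\infty))$ produced this way does not exceed that in $\kappa_{p,q}(z)$. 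A secondary technical point is handling the range where $q$ is large but finite (the Kato-class hypothesis is claimed unnecessary here), ensuring the kernel estimates degrade gracefully; the $q=\infty$ endpoint is deferred to a separate argument in the paper. Once the kernel bounds are in hand, the rest is a routine application of Young's and Hardy–Littlewood–Sobolev inequalities together with the triangle inequality.
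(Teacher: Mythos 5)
Your decomposition $H_z=F_z-(1-\eta_\delta)F_z$ shifts all of the difficulty onto the tail operator with kernel $G_z=(1-\eta_\delta(x,y))F_z(x,y)$, and the tools you propose for it --- pointwise Hankel-function bounds on $|F_z|$ followed by Young/Hardy--Littlewood--Sobolev, or interpolation of the Schur-type $L^1\to L^1$ and $L^\infty\to L^\infty$ bounds obtained by integrating $|G_z|$ --- cannot produce the sharp constant $C\kappa_{p,q}(z)$. The tail is not a harmless remainder: for $z=1+i\epsilon$ with $\epsilon$ small, $F_z(x,y)$ decays only like $|x-y|^{-(n-1)/2}$ (with oscillation $e^{i\sqrt{z}|x-y|}$) out to the scale $|x-y|\sim 1/\epsilon$ before the exponential decay from $\operatorname{Im}\sqrt{z}$ takes over, so $\sup_x\int|G_z(x,y)|\,dy\sim\epsilon^{-(n+1)/2}$ and the crude interpolation yields a constant far worse than $\epsilon^{-\gamma_{p,q}}$. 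Worse, for $(\frac{1}{p},\frac{1}{q})\in\mathcal{R}_1$ one has $\gamma_{p,q}=0$ and the bound must be uniform as $\operatorname{dist}(z,[0,\infty))\to 0$; no estimate based on $|G_z(x,y)|$ alone can achieve this, because the uniform Kenig--Ruiz--Sogge/Stein--Tomas estimates depend essentially on the oscillation and the curvature of the characteristic sphere, which are destroyed by taking absolute values. The same objection applies throughout $\mathcal{R}_2$. (A secondary point: in the paper $\delta$ is ultimately chosen depending on $V$ only, not on $z$, so you cannot rescue the argument by tying $\delta$ to $|z|^{-1/2}$; uniformity in $\delta$ is still required.)

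The paper avoids this by never separating off the tail: it works on the frequency side, observing that multiplying the kernel by $\eta_\delta(x,y)$ amounts to convolving the multiplier $(|\xi|^2-z)^{-1}$ (after rescaling to $|z|=1$) with an $L^1$-normalized bump $\delta^n g(\delta\,\cdot)$. The multiplier is split into a smooth compactly supported piece, a piece supported in $|\xi|\geqslant 1+\epsilon_0$ handled by symbol estimates with Littlewood--Paley decomposition and Bourgain's interpolation, and the singular piece near the unit sphere; for the last one checks that the structural derivative estimates of Lemma 2.6 in Kwon--Lee for the dyadic pieces $\mathfrak{m}_0,\mathfrak{m}_\lambda$ are stable under convolution with an $L^1$-normalized function, so the sharp Propositions 2.4 and 2.5 of Kwon--Lee (resting on Guth--Hickman--Iliopoulou and Tao) apply verbatim to the localized operator. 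To salvage your route you would have to prove the sharp $(L^p,L^q)$ bound for $(1-\eta_\delta)F_z$ by those same oscillatory-integral techniques, at which point nothing is gained over treating $H_z$ directly.
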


\begin{proof}
	By dilation, we may assume that $z\in\mathbb{S}^{n}\backslash\{1\}$:
	\begin{gather*}
	\Big\|\mathscr{F}^{-1}\Big\{\frac{1}{|\xi|^{2}-z/|z|}\cdot\hat{f}(\xi)\Big\}\Big\|_{L^{q}(\mathbb{R}^{n})}\leqslant A\|f\|_{L^{p}(\mathbb{R}^{n})}\\
	\Updownarrow\\
	\Big\|\mathscr{F}^{-1}\Big\{\frac{1}{|\xi|^{2}-z}\cdot\hat{f}(\xi)\Big\}\Big\|_{L^{q}(\mathbb{R}^{n})}\leqslant A|z|^{\frac{n}{2}(\frac{1}{p}-\frac{1}{q})-1}\|f\|_{L^{p}(\mathbb{R}^{n})}.
	\end{gather*}
	Under this assumption, we want to show that \begin{equation}
	\|H_{z}f\|_{L^{q}(\mathbb{R}^{n})}\leqslant C\mathrm{dist} (z, [0, \infty))^{-\gamma_{p, q}}\|f\|_{L^{p}(\mathbb{R}^{n})}, \quad u\in C_{0}^{\infty}(\mathbb{R}^{n}).
	\end{equation}
	
	Denote $\frac{1}{|\xi|^{2}-z}=m(\xi)$ for convenience. We decompose $m(\xi)$ into a part near the unit sphere (the one that makes the operator singular) and a part away from it. To do this, fix a small number $\epsilon_{0}>0$ and choose a $C_{0}^{\infty}(\mathbb{R}^{n})$ function $\rho(\xi)$ that equals $1$ when $1-\epsilon_{0}\leqslant|\xi|\leqslant 1+\epsilon_{0}$ and equals $0$ when $|\xi|\leqslant 1-2\epsilon_{0}$ or $|\xi|\geqslant 1+2\epsilon_{0}$. Let
	\begin{align*}
	& m_{1}(\xi)=(1-\rho(\xi))\mathbbm{1}_{B_{1}(O)}m(\xi),\\ & m_{2}(\xi)=(1-\rho(\xi))\mathbbm{1}_{B_{1}(O)^{c}}m(\xi), \quad \mathrm{and}\\ & m_{3}(\xi)=\rho(\xi)m(\xi),
	\end{align*}
	where $B_{1}(O)$ stands for the unit ball. $H_{z}$ is broken up into three parts accordingly.
	
	In the first place, since $m_{1}(\xi)$ is smooth and compactly supported, it follows that the multiplier operator defined by $m_{1}(\xi)$ is uniformly bounded (i.e. bounded by a universal constant $C$) from $L^{p}(\mathbb{R}^{n})$ to $L^{q}(\mathbb{R}^{n})$ for any $1\leqslant p\leqslant q\leqslant\infty$. Multiplying the kernel of this operator by the bounded function $\eta_{\delta}(x, y)$ does not have any essential effect on the boundedness of this operator, so we see that this part of $H_{z}$ is uniformly bounded from $L^{p}$ to $L^{q}$, a better result than needed. 
	
	Second, notice that $||\xi|^{2}-z|\geqslant 2\epsilon_{0}+\epsilon_{0}^{2}$ when $|\xi|\geqslant 1+\epsilon_{0}$, we have \begin{equation} \label{m2E}
	|\partial^{\alpha}_{\xi}m_{2}(\xi)|\leqslant C_{\alpha, \epsilon_{0}}|\xi|^{-|\alpha|-2}.
	\end{equation} Then we just do a standard work. Pick a Littlewood-Paley bump function $\beta(t)$ on $\mathbb{R}$ supported in $\{t\in\mathbb{R}: \frac{1}{2}\leqslant|t|\leqslant 1\}$ such that $\sum\limits_{j=-\infty}^{\infty}\beta(2^{-j}t)=1$ for all $t\neq 0$. Let $\beta_{0}(t)=\sum\limits_{j=-\infty}^{-1}\beta(2^{-j}t)$, so $\beta_{0}(t)$ is supported in $|t|\leqslant \frac{1}{2}$. Therefore, $\sum\limits_{j=1}^{\infty}\beta(2^{-j}|\xi|)=1$ on the support of $m_{2}(\xi)$. By an easy scaling argument together with the estimate \eqref{m2E}, we deduce for any $r\geqslant 1$, \begin{equation}
	\|\mathscr{F}^{-1}\{\beta(2^{-j}|\xi|)m_{2}(\xi)\}\|_{L^{r}(\mathbb{R}^{n})}\leqslant C2^{(n-\frac{n}{r}-2)j},
	\end{equation} where the constant $C$ is independent of $j$. From these, we know that the multiplier operator defined by $m_{2}(\xi)$ is uniformly bounded from $L^{p}(\mathbb{R}^{n})$ to $L^{q}(\mathbb{R}^{n})$ whenever $(\frac{1}{p}, \frac{1}{q})\in\mathcal{R}_{0}$. Indeed, when $\frac{1}{p}-\frac{1}{q}<\frac{2}{n}$, we apply Young's inequality and sum over $j$. When $\frac{1}{p}-\frac{1}{q}=\frac{2}{n}$, summing over $j$ does not work, but we can resort to Bourgain's interpolation \cite{Bourgain} to get restricted weak type estimates at these $(\frac{1}{p}, \frac{1}{q})$ first, and then apply real interpolation to get strong inequalities. See the appendix for Bonrgain's interpolation method. Once again, multiplying the kernel of the operator corresponding to $m_{2}(\xi)$ by $\eta_{\delta}(x, y)$ does not affect the boundedness of this operator, hence we see that this part of $H_{z}$ is also uniformly bounded from $L^{p}$ to $L^{q}$.
	
	Finally, we deal with $m_{3}(\xi)$, the singular part. Fix a small number $\theta_{0}>0$. If $z$ is in the set $\{z\in e^{i\theta}: \theta\in[\theta_{0}, 2\pi-\theta_{0}]\}$, then we easily have $|\partial^{\alpha}_{\xi}m_{3}(\xi)|\leqslant C_{\alpha, \epsilon_{0}, \theta_{0}}$, and by the same reasoning as for $m_{1}(\xi)$, we settle $m_{3}(\xi)$. If $z$ is close to $1$, note that after taking Fourier transform, the effect of $\eta_{\delta}(x, y)$ on $m_{3}(\xi)$ amounts to convolving $m_{3}(\xi)$ with a function whose $L^{1}$ norm is finite (of the form $\delta^{n} g(\delta x))$. By dilation, we may assume for convenience that $z=1+i\epsilon$ where $|\epsilon|$ is very small. In Kwon-Lee \cite{KL}, it was shown that for such $z$, \[\|m_{3}(D)f\|_{L^{q}(\mathbb{R}^{n})}\leqslant C\epsilon^{-\gamma_{p, q}}\|f\|_{L^{p}(\mathbb{R}^{n})}.\]
	We want to show here the same estimates for the part of $H_{z}$ corresponding to $m_{3}(\xi)$. To explain our proof, we outline the method in \cite{KL}.
	
	Write $\xi=(\xi', \xi_{n})$ where $\xi'\in\mathbb{R}^{n-1}$, $\xi_{n}\in\mathbb{R}$. After decomposing by a partition of unity, discarding the easy smooth part, rotating (so that we may assume that the multiplier is supported near $(0, \cdots, 0, -1)$), and making the change of variable $\xi_{n}\rightarrow\xi_{n}-1$ (now the multiplier is supported near the origin), we are able to express the essential part of $m_{3}(\xi)$ as
	\[\mathfrak{m}(\xi', \xi_{n})=\frac{1}{|\epsilon|}\phi\Big(\frac{r(\xi', \xi_{n})(\xi_{n}-\psi(\xi'))}{|\epsilon|}\Big)\chi_{0}(\xi', \xi_{n}),\]
	where \begin{align*}
	& \phi(t)=\frac{1}{2t\pm i},\\ 
	& \psi(\xi')=1-\sqrt{1-|\xi'|^{2}},\\
	& r(\xi', \xi_{n})=\frac{1}{2}(\xi_{n}+\psi(\xi')-2),
	\end{align*} and $\chi_{0}$ is a smooth function supported in a small neighborhood of the origin. By a further affine transformation, we can make $\psi\in\textbf{Ell}(N, e)$ and $r\in\textbf{Mul}(N, b)$. Here $\textbf{Ell}(N, e)$ and $\textbf{Mul}(N, b)$ are two function classes defined for a large number $N>0$, a small number $e>0$ and some number $b>0$ (to focus on our proof, we omit their definitions and the affine transformation which can be found in 2.2, 2.3 and Remark 7 in \cite{KL}). 
	
	Now we break $\mathfrak{m}(\xi', \xi_{n})$ into the following sum using the $\beta$ and $\beta_{0}$ as in the treatment of $m_{2}(\xi)$ above:
	\begin{multline}
	\mathfrak{m}(\xi', \xi_{n})=\frac{1}{|\epsilon|}\phi\Big(\frac{r(\xi',\xi_{n})(\xi_{n}-\psi(\xi'))}{|\epsilon|}\Big)\beta_{0}\Big(\frac{r(\xi', \xi_{n})(\xi_{n}-\psi(\xi'))}{|\epsilon|}\Big)\chi_{0}(\xi', \xi_{n})\\
	+\frac{1}{|\epsilon|}\sum\limits_{j=1}^{\mathrm{log}\frac{1}{\epsilon}}\phi\Big(\frac{r(\xi', \xi_{n})(\xi_{n}-\psi(\xi'))}{|\epsilon|}\Big)\beta\Big(\frac{r(\xi', \xi_{n})(\xi_{n}-\psi(\xi'))}{2^{j-1}|\epsilon|}\Big)\chi_{0}(\xi', \xi_{n}).
	\end{multline}
	For notational convenience, we denote for $\lambda\geqslant 1$, \begin{align*}
	& \mathfrak{m}_{0}(\xi', \xi_{n})=\phi\Big(\frac{r(\xi', \xi_{n})(\xi_{n}-\psi(\xi'))}{|\epsilon|}\Big)\beta_{0}\Big(\frac{r(\xi', \xi_{n})(\xi_{n}-\psi(\xi'))}{|\epsilon|}\Big)\chi_{0}(\xi', \xi_{n}),\\
	& \mathfrak{m}_{\lambda}(\xi', \xi_{n})=\phi\Big(\frac{r(\xi', \xi_{n})(\xi_{n}-\psi(\xi'))}{|\epsilon|}\Big)\beta\Big(\frac{r(\xi', \xi_{n})(\xi_{n}-\psi(\xi'))}{\lambda|\epsilon|}\Big)\chi_{0}(\xi', \xi_{n}).
	\end{align*}
	Then the question transforms to analyzing the operators $\mathfrak{m}_{0}(D)$ and $\mathfrak{m}_{\lambda}(D)$, where in accordance with convention, $D=\frac{1}{i}(\partial_{x_{1}}, \cdots, \partial_{x_{n}})$. The key estimates related to these two operators are:
	\begin{equation} \label{Prop2.4}
	\begin{aligned}
	\text{(i)}\ & \|\mathfrak{m}_{0}(D)f\|_{L^{q}(\mathbb{R}^{n})}\leqslant C\epsilon^{\frac{1-n}{2}+\frac{n}{p}}\|f\|_{L^{p}(\mathbb{R}^{n})},\\
	& \|\mathfrak{m}_{\lambda}(D)f\|_{L^{q}(\mathbb{R}^{n})}\leqslant C\lambda^{-1}(\epsilon\lambda)^{\frac{1-n}{2}+\frac{n}{p}}\|f\|_{L^{p}(\mathbb{R}^{n})},
	\end{aligned}
	\end{equation} for $p, q$ satisfying $\frac{1}{q}=\frac{n-1}{n+1}(1-\frac{1}{p})$ and $q_{2}<q\leqslant\frac{2(n+1)}{n-1}$. (This is Proposition 2.4 in \cite{KL}. Recall that $q_{2}$ is defined by \eqref{q1} in Section \ref{Section2}.) The constant $C$ in these inequalities is independent of $\epsilon$, $\lambda$, $\psi\in\textbf{Ell}(N, e)$, $r\in\textbf{Mul}(N, b)$ and $f$.
	\begin{equation} \label{Prop2.5}
	\begin{aligned}
	\text{(ii)}\ & \|\mathfrak{m}_{0}(D)f\|_{L^{p}(\mathbb{R}^{n})}\leqslant C\epsilon^{\frac{1-n}{2}+\frac{n}{p}}\|f\|_{L^{p}(\mathbb{R}^{n})},\\
	& \|\mathfrak{m}_{\lambda}(D)f\|_{L^{p}(\mathbb{R}^{n})}\leqslant C\lambda^{-1}(\epsilon\lambda)^{\frac{1-n}{2}+\frac{n}{p}}\|f\|_{L^{p}(\mathbb{R}^{n})},
	\end{aligned} 
	\end{equation} for $p_{1}<p\leqslant\infty$. (This is Proposition 2.5 in \cite{KL}. Recall that $p_{1}$ is defined by \eqref{p1} in Section \ref{Section2}.) Again, the constant $C$ here is independent of $\epsilon$, $\lambda$, $\psi\in\textbf{Ell}(N, e)$, $r\in\textbf{Mul}(N, b)$ and $f$.
	
	The above key estimates \eqref{Prop2.4}, \eqref{Prop2.5} are proved using the recent oscillatory integral theorem of Guth-Hickman-Iliopoulou \cite{GHI} and the bilinear estimate of Tao \cite{Tao}. The rest of Kwon-Lee's proof is then mere interpolation and summation in $j$. 
	
	Let \begin{align*}
	& A_{0}(\xi)=\phi\Big(\frac{\tilde{r}(\xi', \xi_{n})\xi_{n}}{|\epsilon|}\Big)\beta_{0}\Big(\frac{\tilde{r}(\xi', \xi_{n})\xi_{n}}{|\epsilon|}\Big)\chi(\xi'),\\
	& A_{\lambda}(\xi)=\phi\Big(\frac{\tilde{r}(\xi',\xi_{n})\xi_{n}}{|\epsilon|}\Big)\beta\Big(\frac{\tilde{r}(\xi', \xi_{n})\xi_{n}}{\lambda|\epsilon|}\Big)\chi(\xi')
	\end{align*} where $\tilde{r}(\xi', \xi_{n})=r(\xi', \xi_{n}+\psi(\xi'))$. The essential ingredients that the proofs of the two key estimates \eqref{Prop2.4}, \eqref{Prop2.5} rely on are the following observations (Lemma 2.6 in \cite{KL}):
	
	\noindent (i) For every $n-1$ dimensional multi-index $\alpha=(\alpha_{1}, \cdots, \alpha_{n-1})$ with $|\alpha|\leqslant N$, we have \begin{equation} \label{A1}
	\begin{aligned} 
	& |\partial_{\xi'}^{\alpha}A_{0}(\xi)|\leqslant C_{\alpha},\\
	& |\partial_{\xi'}^{\alpha}A_{\lambda}(\xi)|\leqslant C_{\alpha}\lambda^{-1}
	\end{aligned} \end{equation}
	
	\noindent (ii) For every $n-1$ dimensional multi-index $\alpha$, $n$ dimensional multi-index $\zeta=(\zeta', \zeta_{n})$ with $|\alpha|+|\zeta|\leqslant N$, and every natural number $l$, we have
	\begin{equation} \label{A2}
	\begin{aligned}
	& |\partial_{\xi'}^{\zeta'}\partial_{\xi_{n}}^{\zeta_{n}}((\xi_{n})^{l}\partial_{\xi'}^{\alpha}A_{0}(\xi))|\leqslant C_{\alpha, \zeta}(\epsilon)^{-\zeta_{n}+l},\\
	& |\partial_{\xi'}^{\zeta'}\partial_{\xi_{n}}^{\zeta_{n}}((\xi_{n})^{l}\partial_{\xi'}^{\alpha}A_{\lambda}(\xi))|\leqslant C_{\alpha, \zeta}\lambda^{-1}(\epsilon\lambda)^{-\zeta_{n}+l}.
	\end{aligned} \end{equation} In these observations, the constants $C_{\alpha}$, $C_{\alpha, \zeta}$ are independent of $\epsilon$, $\lambda$, $\psi\in\textbf{Ell}(N, e)$ and $r\in\textbf{Mul}(N, b)$.
	
	Notice the right sides of the above two estimates \eqref{A1}, \eqref{A2} do not depend on $\xi$, so convolving with a function with finite $L^{1}$ norm does not affect these size estimates. And obviously, the affine transformations at the begining has no impact on these estimates--we simply make the same affine transformations to the convolution integral. Therefore, \eqref{A1}, \eqref{A2} still hold when the kernel of the operator $m_{3}(D)$ is multiplied by $\eta_{\delta}(x, y)$. It then follows that the key estimates \eqref{Prop2.4} and \eqref{Prop2.5} are also valid for our operator. By Kwon-Lee's proof, we come to the conclusion that we have the same estimates for this part of $H_{z}$ as those for the multiplier operator defined by $m_{3}(\xi)$. The proposition is proved thereby.
\end{proof}

\begin{proposition} \label{Prop2}
	$(\frac{1}{p}, \frac{1}{q})$ are still as in Theorem \ref{Main1} and $z\in\mathbb{C}\backslash[0, \infty)$. For $R_{z}$ we have
	\begin{equation}
	\|R_{z}f\|_{L^{q}(\mathbb{R}^{n})}\leqslant C_{\delta}\kappa_{p, q}(z)(1+|z|^{\frac{1}{2}})\|f\|_{L^{p}(\mathbb{R}^{n})}, \quad u\in C_{0}^{\infty}(\mathbb{R}^{n}).
	\end{equation}
	The constant $C_{\delta}$ depends on $\delta$ but not on $z$.
\end{proposition}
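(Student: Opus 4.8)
The plan is to exploit the fact that $R_z$ has the kernel $-[\eta_\delta,\Delta]F_z(x,y)$, which is supported in the annular region $\{\delta/2\le |x-y|\le \delta\}$, so that away from the diagonal the resolvent kernel and its derivatives are well-behaved. Expanding the commutator, $[\eta_\delta,\Delta]F_z = (\Delta\eta_\delta)F_z + 2\nabla\eta_\delta\cdot\nabla F_z$, and both $\nabla\eta_\delta$ and $\Delta\eta_\delta$ are supported in that annulus with $|\nabla\eta_\delta|\lesssim \delta^{-1}$, $|\Delta\eta_\delta|\lesssim \delta^{-2}$. So I first need size estimates for $F_z(x,y)$ and $\nabla_{x} F_z(x,y)$ in the range $\delta/2\le |x-y|\le \delta$. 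After the usual dilation reducing to $z$ on the unit circle (which, as in Proposition~\ref{Prop1}, costs the factor $|z|^{\frac n2(\frac1p-\frac1q)-1}$ in the final bound, and additionally produces a $|z|^{1/2}$ when a derivative in $x$ lands on $F_z$ — this is the source of the $(1+|z|^{1/2})$ factor), it suffices to bound the operators with kernels $\mathbbm 1_{\{\delta/2\le|x-y|\le\delta\}}\,\partial^{\beta}_x F_z(x,y)$ for $|\beta|\le 1$, $z\in \mathbb S^n\setminus\{1\}$, from $L^p$ to $L^q$, with constants allowed to depend on $\delta$.

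The key point is that because of the cutoff into the annulus, the operator is far better behaved than the full resolvent: I will split $F_z$ exactly as in Proposition~\ref{Prop1} into the three pieces $m_1,m_2,m_3$. The pieces coming from $m_1$ and $m_2$ are handled verbatim as there — smooth, compactly supported (resp. with good symbol bounds and Littlewood–Paley summation), hence uniformly $L^p\to L^q$ bounded for $(\frac1p,\frac1q)\in\mathcal R_0$, and multiplication of the kernel by the bounded function $\Delta\eta_\delta$ or $\nabla\eta_\delta\cdot\nabla(\cdot)$ (the latter costing at worst a $\delta^{-1}$ and a $|z|^{1/2}$ after undoing the dilation) does not destroy this. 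For the singular piece $m_3$ I do \emph{not} need the full machinery of \cite{KL}: since the kernel of $m_3(D)$ restricted to $|x-y|\ge \delta/2$ has, by nonstationary phase / the known stationary-phase asymptotics of the resolvent kernel near the sphere, rapid decay $|\partial^\beta_x[\rho m](x,y)|\lesssim_{N,\delta} (1+|x-y|)^{-N}$ uniformly in $z\in\mathbb S^n\setminus\{1\}$ — one loses only powers of $\langle x-y\rangle$, not of $\mathrm{dist}(z,[0,\infty))$, precisely because on the annulus we are off the diagonal — this piece times $\nabla\eta_\delta$ or $\Delta\eta_\delta$ has a kernel that is $O_{N,\delta}(\langle x-y\rangle^{-N})$, hence is bounded on every $L^p\to L^q$ with $p\le q$ by Young's inequality, uniformly in $z$ on the unit circle. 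Summing the three contributions and restoring the dilation gives the claimed bound, with $\kappa_{p,q}(z)$ absorbing the $|z|^{\frac n2(\frac1p-\frac1q)-1}$ and (trivially, since $\gamma_{p,q}\ge 0$) also $\mathrm{dist}(z,[0,\infty))^{-\gamma_{p,q}}$, and $(1+|z|^{1/2})$ absorbing the derivative loss.

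The main obstacle is making the claim "$m_3(D)$ off the diagonal has rapidly decaying kernel, uniformly in $z$ near $1$" genuinely uniform as $z\to 1$: the stationary point of the phase persists, so one cannot simply integrate by parts everywhere, and one must instead invoke the classical asymptotic expansion of the free resolvent kernel — $F_z(x,y)$ behaves like $|x-y|^{-(n-1)/2}e^{\pm i\sqrt z|x-y|}$ times a symbol of order $-(n-1)/2$ in $\sqrt z|x-y|$ for $\sqrt z|x-y|\gtrsim 1$ — together with a short-range analysis for $\sqrt z |x-y|\lesssim 1$; on the fixed annulus $\delta/2\le|x-y|\le\delta$ with $|z|=1$ both regimes are under control and the bound is genuinely $z$-independent. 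I expect this to be where most of the care (and the dependence of $C_\delta$ on $\delta$) goes; everything else is a repetition of the estimates already carried out in the proof of Proposition~\ref{Prop1}.
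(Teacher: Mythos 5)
Your proposal reaches the right conclusion and shares the paper's skeleton (expand the commutator, note the support in the annulus $\{\delta/2\leqslant|x-y|\leqslant\delta\}$, dilate to $|z|=1$ picking up the extra $|z|^{1/2}$ from the derivative, and split into $m_1,m_2,m_3$), but it diverges from the paper precisely on the singular piece $m_3$. The paper does \emph{not} exploit the off-diagonal support there: it observes that the extra factor $\xi_i$ in the multiplier $\xi_i/(|\xi|^2-z)$ can be absorbed into the cutoff $\chi_0(\xi)$, and then reruns the entire Kwon--Lee machinery of Proposition \ref{Prop1} verbatim, obtaining the bound $C\,\mathrm{dist}(z,[0,\infty))^{-\gamma_{p,q}}$ for this piece. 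You instead propose to use the fact that $\nabla\eta_\delta$ and $\Delta\eta_\delta$ confine the kernel to a fixed compact annulus, where the kernel of $m_3(D)$ and its first derivatives are uniformly controlled in $z$, so that Young's inequality gives a $z$-uniform $L^p\to L^q$ bound with no appeal to \cite{GHI} or \cite{Tao} at all. This is a genuinely more elementary route for $R_z$ (the paper itself uses exactly this off-diagonal trick for $m_2$ but, somewhat asymmetrically, not for $m_3$ or for the $(\Delta\eta_\delta)F_z$ term), and it buys a cleaner, machinery-free proof of Proposition \ref{Prop2}; what the paper's route buys is that nothing new needs to be verified beyond what was already set up for Proposition \ref{Prop1}.

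One caveat: your intermediate claim that the kernel of $m_3(D)$ restricted to $|x-y|\geqslant\delta/2$ has rapid decay $O_N(\langle x-y\rangle^{-N})$ \emph{uniformly} in $z$ near $1$ is false as stated --- as $z\to1$ the stationary phase on the unit sphere survives and the kernel only decays like $|x-y|^{-(n-1)/2}$ at infinity, as you yourself note in your last paragraph. This does not sink the argument, because the cutoffs restrict everything to the fixed compact annulus, where all you need (and all that is true) is a $z$-uniform pointwise bound on the kernel and its gradient; that bound follows from writing the kernel in polar coordinates as $\int\rho(r)r^{n-1}\widehat{d\sigma}(r(x-y))(r^2-1-i\epsilon)^{-1}dr$ and splitting off the value at $r=1$. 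You should state the $m_3$ step in that form (uniform boundedness on compact sets, not rapid decay at infinity) to make the argument airtight. A second point both you and the paper pass over lightly is that the dilation rescales $\delta$ to $\delta|z|^{1/2}$, so the annulus estimates must be uniform over the rescaled radii; this is harmless for $z\in\mathcal{Z}_{p,q}$ (where $|z|\geqslant1$) but worth a sentence.
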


\begin{proof}
	\begin{equation}
	\begin{aligned}
	R_{z}(x, y) & =-[\eta_{\delta}(x, y), \Delta]F_{z}(x, y)\\
	& =(\Delta\eta_{\delta}(x, y))F_{z}(x, y)+2\nabla\eta_{\delta}(x, y)\cdot\nabla F_{z}(x, y)
	\end{aligned}.
	\end{equation}
	Here, all derivatives are with respect to $x$. $(\Delta\eta_{\delta}(x, y))F_{z}(x, y)$ can be handled in exactly the same way as in Proposition \ref{Prop1}, so we concentrate on $\nabla\eta_{\delta}(x, y)\cdot\nabla F_{z}(x, y)$.
	
	Taking Fourier transform, the operator $\frac{d}{dx_{i}}F_{z}(x, y)$ amounts to multiplying on the Fourier transform side by $\frac{\xi_{i}}{|\xi|^{2}-z}$. By dilation again, we may assume that $z\in\mathbb{S}^{2}\backslash\{1\}$:
	\begin{gather*}
	\Big\|\mathscr{F}^{-1}\Big\{\frac{\xi}{|\xi|^{2}-z/|z|}\cdot\hat{f}(\xi)\Big\}\Big\|_{L^{q}(\mathbb{R}^{n})}\leqslant A\|f\|_{L^{p}(\mathbb{R}^{n})}\\
	\Updownarrow\\
	\Big\|\mathscr{F}^{-1}\Big\{\frac{\xi}{|\xi|^{2}-z}\cdot\hat{f}(\xi)\Big\}\Big\|_{L^{q}(\mathbb{R}^{n})}\leqslant A|z|^{\frac{n}{2}(\frac{1}{p}-\frac{1}{q})-\frac{1}{2}}\|f\|_{L^{p}(\mathbb{R}^{n})}.
	\end{gather*}
	Note that this time, the power on $|z|$ is $\frac{1}{2}$ more than that in $\kappa_{p, q}$ because of the additional $\xi_{i}$. With this reduction, we can do as in Proposition \ref{Prop1}, making the following modification as we proceed.
	
	When dealing with $m_{2}(\xi)$ (the estimate becomes $|\partial^{\alpha}_{\xi}m_{2}(\xi)|\leqslant C_{\alpha, \epsilon_{0}}|\xi|^{-|\alpha|-1}$), note that the kernel of the corresponding operator is to be multiplied by a function supported in $\{(x, y): \frac{\delta}{2}\leqslant |x-y|\leqslant \delta\}$ (away from the diagonal $x=y$). Because of this, we need not decompose using a Littlewood-Paley bump function, but can apply integration by parts directly to $\int m_{2}(\xi)e^{i(x-y)\cdot\xi}d\xi$ to see that this kernel is $O((1+|x-y|)^{-N})$ for any natural number $N$ after multiplied by the function related to $\eta_{\delta}$. Hence, the part of $R_{z}$ corresponding to $m_{2}(\xi)$ is uniformly bounded from $L^{p}(\mathbb{R}^{n})$ to $L^{q}(\mathbb{R}^{n})$ for any $1\leqslant p\leqslant q\leqslant\infty$.
	
	For $m_{3}(\xi)$, when $z$ is near $1$, we observe that the additional $\xi_{i}$ can be absorbed into the $C_{0}^{\infty}$ function $\chi_{0}(\xi)$ in the treatment of $m_{3}(\xi)$ in Proposition \ref{Prop1}. Then the proof for Proposition \ref{Prop1} goes through without any difference, and therefore, we obtain the desired conclusion. Note that the bound $C$ depends on $\delta$ here because of the derivatives on $\eta_{\delta}$.
\end{proof}

Now we can carry out the ideas in Blair-Sire-Sogge \cite{BSS}. Recall that we are dealing with the $q\neq\infty$ cases.

Choose a lattice $\{Q_{j}\}$ of (nonoverlapping) cubes of side length $\delta$. For each $Q_{j}$, consider further the concentric cube $Q_{j}^{\ast}$ of radius $2\delta$. The $Q_{j}^{\ast}$'s have finite overlap, hence \begin{equation}
\sum\limits_{j}\mathbbm{1}_{Q_{j}^{\ast}}\leqslant C_{n},
\end{equation}
where $C_{n}$ is a constant depending only on $n$.

We have by \eqref{id}, \begin{equation} \label{idV}
u=H_{z}((-\Delta+V-z)u)+R_{z}u-H_{z}(Vu).
\end{equation}
Recall that $H_{z}(x, y)$ is supported in the set $\{(x, y): |x-y|\leqslant \delta\}$ and that $R_{z}(x, y)$ in the set $\{(x, y): \frac{\delta}{2}\leqslant |x-y|\leqslant \delta\}$. Taking $L^{q}$ norm on both sides of \eqref{idV} where $q$ is as in the main theorem and applying Propositions \ref{Prop1} and \ref{Prop2}, we have for $p$, $q$ as before and $z\in\mathcal{Z}_{p, q}$, \begin{equation} \label{Lq}
\|u\|_{L^{q}(Q_{j})}\leqslant \kappa_{p, q}(z)(C\|(-\Delta+V-z)u\|_{L^{p}(Q_{j}^{\ast})}+C_{\delta}|z|^{\frac{1}{2}}\|u\|_{L^{p}(Q_{j}^{\ast})})+C\|Vu\|_{L^{p}(Q_{j}^{\ast})}.
\end{equation}
In the above, we used the fact that $|z|\geqslant 1$ when $z\in\mathcal{Z}_{p, q}$ and the fact that $\kappa_{p, q}(z)\leqslant 1$ (hence there is no $\kappa_{p, q}(z)$ before $\|Vu\|_{L^{p}(Q_{j}^{\ast})}$). Raising both sides to the $q$-th power, we have \begin{equation} \label{qth}
	\|u\|_{L^{q}(Q_{j})}^{q}\leqslant \kappa_{p, q}(z)^{q}(C\|(-\Delta+V-z)u\|_{L^{p}(Q_{j}^{\ast})}^{q}+C_{\delta}(|z|^{\frac{1}{2}}\|u\|_{L^{p}(Q_{j}^{\ast})})^{q}+C\|Vu\|_{L^{p}(Q_{j}^{\ast})}^{q}.
\end{equation}
Since $V\in(L^{\sigma}(\mathbb{R}^{n})+L^{\infty}(\mathbb{R}^{n}))$, we can choose a $\delta$ so small that $C_{n}C\|V\|_{L^{\sigma}(Q^{\ast}_{j})}^{q}<\frac{1}{2}$ for any $j$. Here $C$ is the constant in \eqref{qth}.
Recalling that $\frac{1}{p}=\sigma+\frac{1}{q}$, we have by Holder's inequality \begin{equation} \label{Holder}
\|Vu\|_{L^{p}(Q_{j}^{\ast})}\leqslant\|V\|_{L^{\sigma}(Q_{j}^{\ast})}\|u\|_{L^{q}(Q_{j}^{\ast})}.
\end{equation}

Finally we sum up the balls $Q_{j}$, and use inequalities \eqref{qth}, \eqref{Holder} and our choice of $\delta$ to get estimates on $\mathbb{R}^{n}$: \begin{equation} \label{sum} \begin{aligned}
& \|u\|_{L^{q}(\mathbb{R}^{n})}^{q} \leqslant\sum_{j} \|u\|_{L^{q}(Q_{j})}^{q}\\
& \leqslant \sum_{j}\{\kappa_{p, q}(z)^{q}[C\|(-\Delta+V-z)u\|_{L^{p}(Q_{j}^{\ast})}^{q}+C_{\delta}(|z|^{\frac{1}{2}}\|u\|_{L^{p}(Q_{j}^{\ast})})^{q}]+C\|Vu\|_{L^{p}(Q_{j}^{\ast})}^{q}\}\\
& \leqslant \sum_{j} \{\kappa_{p, q}(z)^{q}[C\|(-\Delta+V-z)u\|_{L^{p}(Q_{j}^{\ast})}^{q}+C_{\delta}(|z|^{\frac{1}{2}}\|u\|_{L^{p}(Q_{j}^{\ast})})^{q}]\\
& \quad +C\|V\|_{L^{\sigma}(Q_{j}^{\ast})}^{q}\|u\|_{L^{q}(B_{j}^{\ast})}^{q}\}\\
& \leqslant \kappa_{p, q}(z)^{q}[C_{n}^{\frac{q}{p}}C\|(-\Delta+V-z)u\|_{L^{p}(\mathbb{R}^{n})}^{q}+C_{n}^{\frac{q}{p}}C_{\delta}(|z|^{\frac{1}{2}}\|u\|_{L^{p}(\mathbb{R}^{n})})^{q}]\\
& \quad +C_{n}C(\mathrm{sup}_{j}\|V\|_{L^{\sigma}(Q_{j}^{\ast})}^{q})\|u\|_{L^{q}(\mathbb{R}^{n})}^{q}\\
& \leqslant \kappa_{p, q}(z)^{q}[C\|(-\Delta+V-z)u\|_{L^{p}(\mathbb{R}^{n})}^{q}+C_{\delta}(|z|^{\frac{1}{2}}\|u\|_{L^{p}(\mathbb{R}^{n})})^{q}]+\frac{1}{2}\|u\|_{L^{q}(\mathbb{R}^{n})}^{q}.
\end{aligned}
\end{equation}
Note that we can sum up the $Q_{j}^{\ast}$ on the right in the above process just as we do on the left because $q>p$.

Moving the $\frac{1}{2}\|u\|_{L^{q}(\mathbb{R}^{n})}^{q}$ on the right of \eqref{sum} to the left and then taking $\frac{1}{q}$-th power on both sides gives the desired result.

Now we turn to the proof of the $q=\infty$ case, when the above ``summing up'' procedure does not work. We get around this difficulty by using test functions and duality. Here instead of using identity \eqref{id}, we use identity \eqref{1}:
\begin{equation} \label{1'}
	I=(-\Delta-z)\circ H_{z}+R_{z}.
\end{equation}

To prove \eqref{ResV}, it suffices to prove \begin{equation}
|\int u\psi dx|\leqslant C\kappa_{p, \infty}(z)(\|(-\Delta+V-z)u\|_{L^{p}(\mathbb{R}^{n})}+|z|^{\frac{1}{2}}\|u\|_{L^{p}(\mathbb{R}^{n})})+\frac{1}{2}\|u\|_{L^{\infty}(\mathbb{R}^{n})},
\end{equation}
for any $u\in L^{\infty}(\mathbb{R}^{n})$ and $\psi\in C_{0}^{\infty}(\mathbb{R}^{n})$ satisfying $\|\psi\|_{L^{1}(\mathbb{R}^{n})}=1$. Applying identity \eqref{1'}, we have
\begin{equation} \begin{aligned}
|\int u\psi dx| & \leqslant |\int u((-\Delta-z)\circ H_{z}\psi) dx|+|\int u(R_{z}\psi) dx|\\
&\leqslant |\int ((-\Delta-z)u)(H_{z}\psi) dx|+|\int u(R_{z}\psi) dx|\\
& \leqslant |\int ((-\Delta+V-z)u)(H_{z}\psi) dx|+|\int u(R_{z}\psi) dx|+|\int uV(H_{z}\psi) dx|.
\end{aligned}
\end{equation}

By duality, $\|H_{z}\|_{L^{1}(\mathbb{R}^{n})\rightarrow L^{p'}(\mathbb{R}^{n})}\leqslant C\kappa_{p, \infty}(z)$, hence from Holder's inequality we get \begin{equation}
\begin{aligned}
|\int ((-\Delta+V-z)u)(H_{z}\psi) dx| & \leqslant \|(-\Delta+V-z)u\|_{L^{p}(\mathbb{R}^{n})}\|H_{z}\psi\|_{L^{p'}(\mathbb{R}^{n})}\\
&\leqslant C\kappa_{p, \infty}(z)\|(-\Delta+V-z)u\|_{L^{p}(\mathbb{R}^{n})}\|\psi\|_{L^{1}(\mathbb{R}^{n})}\\
& \leqslant C\kappa_{p, \infty}(z)\|(-\Delta+V-z)u\|_{L^{p}(\mathbb{R}^{n})}.\\
\end{aligned}
\end{equation}
For the same reason, since $\|R_{z}\|_{L^{1}(\mathbb{R}^{n})\rightarrow L^{p'}(\mathbb{R}^{n})}\leqslant C\kappa_{p, \infty}(z)|z|^{\frac{1}{2}}$, \begin{multline}
|\int u(R_{z}\psi) dx|\leqslant \|u\|_{L^{p}(\mathbb{R}^{n})}\|R_{z}\psi\|_{L^{p'}(\mathbb{R}^{n})}\leqslant \\ C\kappa_{p, \infty}(z)|z|^{\frac{1}{2}}\|u\|_{L^{p}(\mathbb{R}^{n})}\|\psi\|_{L^{1}(\mathbb{R}^{n})}
\leqslant C\kappa_{p, \infty}|z|^{\frac{1}{2}}\|u\|_{L^{p}(\mathbb{R}^{n})}.
\end{multline}

It remains to tackle $|\int uV(H_{z}\psi) dx|$. By the definition of the Kato class and the well-known estimate of $H_{z}(x, y)$ that $|H_{z}(x, y)|\leqslant Ch_{n}(|x-y|)$ when $|x-y|$ is small (see \cite{BSS} for detail),
\[H_{z}(Vu)=\int H_{z}(x, y)V(y)u(y)dy, \quad u\in L^{\infty}(\mathbb{R}^{n})\]
defines an absolutely convergent integral and is bounded in $x$. It then follows easily that $\int uV(H_{z}\psi) dx$ is absolutely convergent as well.
Therefore, we may use Fubini's theorem to get \begin{equation}
|\int uV(H_{z}\psi) dx|=|\int H_{z}(Vu)\psi dx|\leqslant\|H_{z}(Vu)\|_{L^{\infty}(\mathbb{R}^{n})}\|\psi\|_{L^{1}(\mathbb{R}^{n})}=\|H_{z}(Vu)\|_{L^{\infty}(\mathbb{R}^{n})}.
\end{equation}
The question now becomes bounding $\|H_{z}(Vu)\|_{L^{\infty}(\mathbb{R}^{n})}$ by $\frac{1}{2}\|u\|_{L^{\infty}(\mathbb{R}^{n})}$. For this, we simply apply the definition of Kato class and the well-known estimate of $H_{z}(x, y)$ above again to see that if $\delta$ is chosen small enough, then we will have \[|\int H_{z}(x, y)V(y)dy|\leqslant\frac{1}{2}.\]
This finishes the proof of the Theorem.
\qed

\paragraph{Proofs of Corollaries}
Corollaries \ref{Cor1} and \ref{Cor2} are obvious. To prove Corollary \ref{Cor3}, we essentially invoke the same idea as that in the proof of the main theorems. However, this time, we use a simpler relation: \begin{equation}
u=(-\Delta-(\lambda+i\mu)^{2})^{-1}(-\Delta+V-(\lambda+i\mu)^{2})u-(-\Delta-(\lambda+i\mu)^{2})^{-1}(Vu).
\end{equation}
Take $L^{q}$ norms on both sides and apply Kwon-Lee \cite{KL}'s resolvent estimates for the operator ``$(-\Delta-(\lambda+i\mu)^{2})^{-1}$'' to get \begin{equation}
	\|u\|_{L^{q}(\mathbb{R}^{n})}\leqslant C\lambda^{n(\frac{1}{p}-\frac{1}{q})-2+\gamma_{p, q}}\mu^{-\gamma_{p, q}}\|(-\Delta+V-(\lambda+i\mu)^{2})u\|_{L^{p}(\mathbb{R}^{n})}+C\|Vu\|_{L^{p}(\mathbb{R}^{n})}.
\end{equation}
For $\|Vu\|_{L^{p}(\mathbb{R}^{n})}$, use Holder's inequality as in the proof of the main theorems. Recalling our condition that $\|V\|_{L^{\sigma}(\mathbb{R}^{n})}$ is small enough, we bound $C\|Vu\|_{L^{p}(\mathbb{R}^{n})}$ by $\frac{1}{2}\|u\|_{L^{p}(\mathbb{R}^{n})}$. The conclusion of the corollary then follows. Observe that here we don't break $\mathbb{R}^{n}$ into small cubes.

The first Statement in Corollary \ref{Cor4} is obvious. To show the second Statement, simply set $\lambda=2\mu$ and substitute $\chi_{[0, 2\mu)}^{V}f$ for $u$ in \eqref{Extension2} (the condition that $\mu$ be bounded away from $0$ is just to ensure that $2\mu+i\mu\in\mathcal{Z}_{p, q}$): \begin{equation}
\|\chi_{[0, 2\mu)}^{V}f\|_{L^{q}(\mathbb{R}^{n})}\leqslant C\mu^{n(\frac{1}{p}-\frac{1}{q})-2}\|(-\Delta+V-(\lambda+i\mu)^{2})\chi_{[0, 2\mu)}^{V}f\|_{L^{p}(\mathbb{R}^{n})}.
\end{equation}
Noticing that if $\tau\in[0, 2\mu)$, then $|\tau^{2}-(2\mu+i\mu)^{2}|\approx\mu^{2}$, we see that the desired conclusion follows from the spectral theorem. \qed

\section*{Appendix}
We state Bourgain's interpolation method.
\begin{lemma}
	Suppose that an operator $T$ between function spaces is the sum of the operators ${T_{j}}$: \[T=\displaystyle\sum_{j=1}^{\infty}T_{j}.\] If for $1 \leqslant p_{1}, p_{2}, q_{1}, q_{2} \leqslant \infty$, there exist $\beta_1,\beta_2 > 0$ and $M_{1}, M_{2} > 0$ such that each $T_{j}$ satisfies
	\[\|T_j\|_{L^{p_1}\rightarrow L^{q_1}}\le M_12^{-j\beta_1},\]
	and
	\[\|T_j\|_{L^{p_2}\rightarrow L^{q_2}}\le M_22^{j\beta_2},\]
	then we have restricted weak type estimate for the operator $T$ between two intermediate spaces:\[
	\|Tf\|_{L^{q, \infty}}\leqslant C(\beta_{1}, \beta_{2}) M_{1}^{1-\theta}M_{2}^{\theta}\|f\|_{L^{p, 1}},\] where
	\[\theta=\frac{\beta_{1}}{\beta_{1}+\beta_{2}},\]
	\[\frac{1}{p}=\frac{1-\theta}{p_{1}}+\frac{\theta}{p_{2}}, \ \  \frac{1}{q}=\frac{1-\theta}{q_{1}}+\frac{\theta}{q_{2}}.\]
\end{lemma}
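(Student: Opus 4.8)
The plan is to establish the equivalent \emph{restricted weak type} $(p,q)$ bound for $T$, i.e.\ that there is a constant $C=C(\beta_1,\beta_2,q_1,q_2)$ with
\[\|T\mathbbm{1}_{E}\|_{L^{q,\infty}}\leqslant C\,M_1^{1-\theta}M_2^{\theta}|E|^{1/p}\]
for every measurable set $E$ of finite measure. Since restricted weak type $(p,q)$ is equivalent to boundedness from $L^{p,1}$ into $L^{q,\infty}$, this is exactly the assertion of the lemma. So the first step is to fix $E$ with $0<|E|<\infty$ and put $f=\mathbbm{1}_{E}$, noting $\|f\|_{L^{p_i}}=|E|^{1/p_i}$ for $i=1,2$.

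The second step is Bourgain's splitting. Given $\lambda>0$, choose an integer $J$ (to be specified below) and write $Tf=G_1+G_2$ with $G_1=\sum_{j\leqslant J}T_jf$ and $G_2=\sum_{j>J}T_jf$. For the high tail, the triangle inequality in $L^{q_1}$, the hypothesis $\|T_j\|_{L^{p_1}\to L^{q_1}}\leqslant M_12^{-j\beta_1}$, and summation of the geometric series (this is where $\beta_1>0$ is used) give $\|G_2\|_{L^{q_1}}\leqslant C\,M_12^{-J\beta_1}|E|^{1/p_1}$; symmetrically, using $\|T_j\|_{L^{p_2}\to L^{q_2}}\leqslant M_22^{j\beta_2}$ and $\beta_2>0$, one gets $\|G_1\|_{L^{q_2}}\leqslant C\,M_22^{J\beta_2}|E|^{1/p_2}$. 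Chebyshev's inequality applied to each piece then yields
\[|\{\,|Tf|>\lambda\,\}|\leqslant\Big|\big\{|G_1|>\tfrac{\lambda}{2}\big\}\Big|+\Big|\big\{|G_2|>\tfrac{\lambda}{2}\big\}\Big|\leqslant C\lambda^{-q_2}\big(M_22^{J\beta_2}|E|^{1/p_2}\big)^{q_2}+C\lambda^{-q_1}\big(M_12^{-J\beta_1}|E|^{1/p_1}\big)^{q_1}.\]

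The third step is to optimize in $J$: take $J$ to be the nearest integer to the real number equalizing the two terms on the right, namely the solution of $2^{J(\beta_1q_1+\beta_2q_2)}=\lambda^{q_2-q_1}M_1^{q_1}M_2^{-q_2}|E|^{q_1/p_1-q_2/p_2}$ (rounding to an integer changes the bound only by a factor depending on $\beta_1,\beta_2,q_1,q_2$). Substituting this $J$ and simplifying the resulting powers of $\lambda$, $M_1$, $M_2$ and $|E|$ — using $\theta=\beta_1/(\beta_1+\beta_2)$ together with the defining relations $\tfrac1q=\tfrac{1-\theta}{q_1}+\tfrac{\theta}{q_2}$ and $\tfrac1p=\tfrac{1-\theta}{p_1}+\tfrac{\theta}{p_2}$ — the right-hand side collapses exactly to $C\lambda^{-q}\big(M_1^{1-\theta}M_2^{\theta}|E|^{1/p}\big)^{q}$, so that $\sup_{\lambda>0}\lambda\,|\{|Tf|>\lambda\}|^{1/q}\leqslant C M_1^{1-\theta}M_2^{\theta}|E|^{1/p}$, which is the claimed restricted weak type inequality. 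I expect the only genuine labor to be this last exponent bookkeeping, which is purely algebraic; the two places needing a little care are the convergence of the geometric series (ensured precisely by $\beta_1,\beta_2>0$) and the use of the standard equivalence between restricted weak type $(p,q)$ and the $L^{p,1}\to L^{q,\infty}$ estimate, which legitimizes the reduction to indicator functions in the first step.
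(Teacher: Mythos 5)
Your proof is correct. Note that the paper does not actually prove this lemma: it states it in the appendix as a known result (Bourgain's summation trick, cited earlier as \cite{Bourgain}), so there is no in-paper argument to compare against; what you have written is the standard proof of that result. I checked the exponent bookkeeping in your third step: with $A=\beta_1q_1+\beta_2q_2$ one finds $q=q_1q_2(\beta_1+\beta_2)/A$, and the equalized value of the two Chebyshev terms carries the powers $-q$ on $\lambda$, $(1-\theta)q$ on $M_1$, $\theta q$ on $M_2$, and $q/p$ on $|E|$, exactly as you assert. Two loose ends worth recording, neither fatal: (i) the lemma permits $q_1,q_2=\infty$, and in that case the corresponding Chebyshev term $\lambda^{-q_i}\|\cdot\|_{q_i}^{q_i}$ must be replaced by the standard variant in which $J$ is chosen so that the $L^\infty$ piece is $\leqslant\lambda/2$ (hence contributes a null set) and only the other term is optimized; the resulting bound is the same. (ii) The reduction to indicator functions via the equivalence of restricted weak type $(p,q)$ with $L^{p,1}\to L^{q,\infty}$ boundedness uses the (sub)linearity of $T$ and requires $p<\infty$; you should also note that when the optimal real $J^*$ falls below $1$ you take $G_1=0$, which only improves the bound since the geometric tail starting at $j=1$ is then comparable to $2^{-J^*\beta_1}$.
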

If the power of 2 in the bounds changes linearly as we go from $(\frac{1}{p_{1}}, \frac{1}{q_{1}})$ to $(\frac{1}{p_{2}}, \frac{1}{q_{2}})$, then the $(\frac{1}{p}, \frac{1}{q})$ in the conclusion is exactly the point at which the power becomes $0$.

\paragraph{Acknowledgement} The author is grateful to his postdoc mentor Professor Baoping Liu for his support, encouragement and advice throughout the author's time at BICMR, Peking University. The author is also indebted to Cheng Zhang of the University of Rochester for reading the manuscript and proposing many suggestions that have improved the article a great deal. Finally, the author would like to thank the anonymous referee for advising him to add the application of the main theorems to the article. This has made the article much better.

\bibliography{Resol.with.Potentials}

\bibliographystyle{plain}
\vspace{0.5cm}
\noindent\textit{Email address:} rentianyi@pku.edu.cn

\noindent Beijing International Center for Mathematical Research, Peking University, Beijing, China 100871

\end{document}